\newtheorem{definition}{Definition}[subsection]
\newtheorem{proposition}{Proposition}[subsection]
\newtheorem{example}{Example}[subsection]
\newtheorem{theorem}{Theorem}[subsection]
\newcommand{\Hom}{\mathrm{Hom}}
\title{Intuitionistic logic, dual intuitionistic logic, and modality}
\author{Safal Raman Aryal}
\begin{document}
\begin{abstract}
    We explore various semantic understandings of dual intuitionistic logic by exploring the relationship between co-Heyting algebras 
    and topological spaces. First, we discuss the relevant ideas in the setting of Heyting algebras and intuitionistic logic, 
    showing organically the progression from the primordial example of lattices of open sets of topological spaces to more general 
    ways of thinking about Heyting algebras. Then, we adapt the ideas to the dual intuitionistic setting, and use them to prove a 
    number of interesting properties, including a deep relationship both intuitionistic and dual intuitionistic logic share through Kripke semantics
    to the modal logic $\mathsf{S4}$. 
\end{abstract}
\maketitle
\tableofcontents
\section*{Introduction}
In this paper, we aim to explore intuitionistic logic and its dual from an ``immanent'' mathematical perspective. The approach we will take is different from the 
common route of directly constructing Kripke semantics: rather, we will exploit functorial dualities that exist between certain classes
of topological and algebraic objects, and use them to characterize the features of dual intuitionistic logic before moving to Kripke semantics. Before results on dual intuitionistic 
logic are explained, we will spend some time developing the relation between topological and algebraic semantics for intuitionistic logic by 
considering the relationship between Heyting algebras and lattices of open subsets of a topological space. Our development of dual intuitionistic logic will begin with analysis of the lattice of 
\textit{closed} sets of a topological space. We hope this point of view helps the reader compare and contrast the features of these 
two logics, especially given the paraconsistent characteristics of dual intuitionistic logic.
In a set of concluding remarks, we will discuss the relationship between our topological view of these logics, and the possible worlds 
semantics they can be given, and show that these two semantic characterizations of the logics can be subsumed under the Kripke semantics 
of $\mathsf{S4}$ modal logic.

We will begin our exposition concretely, by exploring the topological roots of the Heyting algebra. This point of view motivates the 
\textit{spatial} view of intuitionistic (and dual intuitionistic) logic we wish to emphasize throughout this paper, and complements
the abstract algebraic methods used in a natural way. Indeed, it is the point of view of the author that since Stone's Representation Theorem,
topological methods have been an indispensable to understanding a wide range of algebraic structures in an ``immanent'' and effective way.
The rich dualities and correspondences that exist are a testament to the power of functorial mathematics. It is the hope of the 
author that discussing these correspondences in the context of \textit{logical} systems and in relation to modal logic will prove 
both mathematically and philosophically rich. Indeed, while much of this material is already known, the author believes that the \textit{framing}
provided here -- specifically the connection that both logics share to $\mathsf{S4}$ -- is relatively novel, though Zolfaghari and Reyes provide 
a similar proof. \cite{reyes_1996_biheyting} 

On this account, most of the results will look familiar, but perhaps the proofs and connections emphasized will not. Most propositions as stated are necessary for the 
structure of exposition as adopted in the paper, and synthesize commonly known results throughout the literature in ways that are not usually 
seen. Thus, as the author has not directly copied propositions or proofs (with the exception of the proof of Stone's Representation Theorem),
propositions should be understood as being proven with a synthesis of the resources given in the references of the paper. However, references that 
were used to select results to prove for each section (or to introduce technical tools) are given at the beginning of each section. 

This paper comes out of an undergraduate research project done at Tufts University on non-classical logics, funded by the Tufts Summer Scholars grant.
Details can be found in the Acknowledgements section.

\section{Open sets, Heyting algebras, and intuitionistic logic}
The exposition in this section will re-prove a number of results from Grätzer \cite{gratzer_2017_lattice} and Johnstone \cite{johnstone_1982_stone}. 
Basic topological facts can be found in Munkres \cite{munkres_2018_topology}, likewise, basic facts about algebra are recalled mirroring the 
exposition from Aluffi \cite{paoloaluffi_2021_algebra}, though that text does not deal with the structures at issue in this paper.

Let $X$ be a set. A \textit{topology} on $X$ is a collection of subsets $\mathcal{O}(X) \subseteq \mathcal{P}(X)$, with elements called \textit{open sets}, satisfying
the following properties
\begin{definition}
    Topology
\end{definition}
A \textit{topology} $\mathcal{O}(X) \subseteq \mathcal{P}(X) $ on a set $X$ satisfies the following properties:
\begin{enumerate}
    \item $\emptyset, X \in \mathcal{O}(X)$
    \item $\mathcal{O}(X)$ is closed under finite intersections
    \item $\mathcal{O}(X)$ is closed under arbitrary unions
\end{enumerate}

If $X$ is a set that has a topology defined on it, then we may refer to it as a \textit{topological space}. Here, we concern ourselves
with the order structure of the topology $\mathcal{O}(X)$ itself. Given that $\mathcal{O}(X)$ is a collection of subsets of $X$, containing 
$\emptyset$ and $X$, it seems natural to induce this order with \textit{inclusion}, where for $A, B \in \mathcal{O}(X)$, $A \leq B \iff 
A \subseteq B$. We will now explore some properties of $\mathcal{O}(X)$ with this partial order put on it. It turns out that this order 
structure gives rise to a rich \textit{logical} structure worth characterizing in detail: indeed, it turns out to be a \textit{lattice}.

\begin{definition}
    A \textit{lattice} $L$ is a partially ordered set in which for any two elements $a, b \in L$, there exists a unique \textit{supremum},
    denoted $a \vee b$, and \textit{infimum}, denoted $a \wedge b$.\end{definition}
\begin{proposition}
    For any topology $\mathcal{O}(X)$ on a set $X$, $(\mathcal{O}(X), \subseteq)$ is a bounded distributive lattice.
\end{proposition}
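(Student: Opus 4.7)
The plan is to unpack the lattice structure directly from the three topology axioms, then observe that distributivity is inherited from the ambient Boolean algebra $\mathcal{P}(X)$.

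First I would establish the lattice operations. Given $A, B \in \mathcal{O}(X)$, I claim that $A \wedge B = A \cap B$ and $A \vee B = A \cup B$. For the meet: axiom (2) guarantees $A \cap B \in \mathcal{O}(X)$, and $A \cap B$ is the greatest lower bound of $\{A,B\}$ under $\subseteq$ already in $\mathcal{P}(X)$, so a fortiori it is the greatest lower bound among open sets containing it. For the join: axiom (3) (applied to the two-element family $\{A,B\}$) puts $A \cup B$ in $\mathcal{O}(X)$, and again $A \cup B$ is the least upper bound in $\mathcal{P}(X)$, hence remains so when we restrict to $\mathcal{O}(X)$. A brief remark is in order that because the sup/inf of two elements of a subposet, when they exist and coincide with the sup/inf in the ambient poset, are automatically the sup/inf in the subposet, we do not need to re-verify universal properties from scratch.

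Next I would read off boundedness from axiom (1): $\emptyset, X \in \mathcal{O}(X)$, and for any $A \in \mathcal{O}(X)$ we have $\emptyset \subseteq A \subseteq X$, so $\emptyset$ is the bottom and $X$ is the top.

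Finally, for distributivity, the identity
\[
A \cap (B \cup C) = (A \cap B) \cup (A \cap C)
\]
holds for arbitrary subsets of $X$, hence in particular for open sets. Since I have identified $\wedge$ with $\cap$ and $\vee$ with $\cup$ in the previous paragraph, this is exactly the lattice-theoretic distributivity law in $(\mathcal{O}(X), \subseteq)$.

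There is essentially no serious obstacle here; the only conceptual point worth flagging is the transfer of sup/inf from $\mathcal{P}(X)$ down to $\mathcal{O}(X)$, which is justified precisely by closure under binary intersection and binary union guaranteed by axioms (2) and (3). The proof is otherwise a direct translation of the topology axioms into lattice-theoretic language.
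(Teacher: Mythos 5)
Your proposal is correct and follows essentially the same route as the paper's proof: meets and joins are realized as intersections and unions via the closure axioms, distributivity is inherited from $\mathcal{P}(X)$, and boundedness comes from $\emptyset, X \in \mathcal{O}(X)$. Your explicit remark about transferring sup/inf from the ambient poset down to the subposet is a slightly more careful justification of a step the paper treats informally, but it is not a different argument.
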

\begin{proof}
    First we show that $\mathcal{O}(X)$ is a lattice.
    Let $A, B \in \mathcal{O}(X)$, with order given by inclusion. Define $A \vee B := A \cup B$ and $A \wedge B := A \cap B$. Since $\mathcal{O}(X)$ is closed under 
    arbitrary unions and finite intersections, $A \vee B$ and $A \wedge B$ are guaranteed to exist and are the supremum and infimum of the two objects
    relative to this order respectively, because the union is the smallest set with both $A$ and $B$ as subsets, and the intersection is the largest shared subset 
    of $A$ and $B$. Thus, $\mathcal{O}(X)$ is a lattice. To show that $\mathcal{O}(X)$ is distributive, it suffices to note that $A \vee (B \wedge C) = (A \vee B) \wedge 
    (A \vee C)$ and $A \wedge (B \vee C) = (A \wedge B) \vee (A \wedge C)$ hold by definition from the distributivity of unions 
    and intersections for sets. Finally, note that for any open set $A \in \mathcal{O}(X)$, we have that $A \subseteq X$, and $\emptyset \subseteq A$.
    Because the axioms for a topology guarantee the inclusion of $X$ and the empty set, observe that $X \vee A = X$ and $\emptyset \wedge 
    A = \emptyset$, thus $X$ acts as a top element, and $\emptyset$ as a bottom element, meaning $\mathcal{O}(X)$ is bounded.
\end{proof}

Being a distributive lattice already gives $\mathcal{O}(X)$ a lot of interesting structure. However, there is actually \textit{more} structure we can 
give to $\mathcal{O}(X)$ -- it is possible to define an ``implication'' operator on this topology, treating open sets as logical propositions, and 
operations on open sets as logical operators (indeed, as we will see, \textit{intuitionistic} operators).

\begin{definition}
    Let $L$ be a bounded distributive lattice. Then for any $a, b \in L$, we define the \textit{Heyting implication} $a \rightarrow b$ as follows 
\end{definition}
\begin{align*}
    a \rightarrow b := \bigvee\{x \in L \mid a \wedge x \leq b\}.
\end{align*}
We will now use this definition to prove another property of the lattice $\mathcal{O}(X)$.
\begin{proposition}
    For any $A, B \in \mathcal{O}(X)$, the Heyting implication $A \rightarrow B$ always exists in $\mathcal{O}(X)$.
\end{proposition}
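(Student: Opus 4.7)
The plan is to exploit the fact that topologies are closed under \emph{arbitrary} unions, a property strictly stronger than what a general bounded distributive lattice affords. The definition of $A \rightarrow B$ asks for the join of a potentially infinite family of open sets, which is precisely the kind of supremum that need not exist in an arbitrary bounded distributive lattice; but in $\mathcal{O}(X)$, the union of any such family lies back in $\mathcal{O}(X)$, so existence will be immediate once we identify this union as the join in the order $(\mathcal{O}(X), \subseteq)$.

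Concretely, I would let $S := \{U \in \mathcal{O}(X) \mid A \cap U \subseteq B\}$, which is nonempty since $\emptyset \in S$, and set $W := \bigcup S$. The third topology axiom places $W$ in $\mathcal{O}(X)$. To confirm that $W$ is in fact $\bigvee S$ in $(\mathcal{O}(X), \subseteq)$, I would argue that $W$ is an upper bound of $S$ by construction, and that any other $V \in \mathcal{O}(X)$ that dominates every $U \in S$ must satisfy $V \supseteq \bigcup S = W$. This is the general principle that joins in a sub-poset of $(\mathcal{P}(X), \subseteq)$ closed under unions are computed as set-theoretic unions, and it is the \emph{only} point where care is needed: one should not confuse the supremum operation of a general lattice with the structure-specific union available here.

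Although only existence is required by the statement, I would finish by remarking that $W$ genuinely behaves as a logical implication, since distributivity of intersection over arbitrary unions gives
\[
A \cap W \;=\; A \cap \bigcup_{U \in S} U \;=\; \bigcup_{U \in S} (A \cap U) \;\subseteq\; B,
\]
so the adjoint relation $A \wedge C \leq B \iff C \leq A \rightarrow B$ holds in $\mathcal{O}(X)$. The main conceptual (rather than technical) obstacle is simply making it transparent that the supremum in the definition of Heyting implication is indexed over an arbitrary set, so that finite-meet/finite-join distributivity is not sufficient; it is exactly the arbitrary-union axiom of a topology that rescues the definition, and highlighting this dependence is the pedagogical point of the proposition.
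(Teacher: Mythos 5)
Your proposal is correct, and it is in fact more on-target than the paper's own argument. The paper's proof spends its effort showing that the indexing family $\{C \in \mathcal{O}(X) \mid A \cap C \subseteq B\}$ is \emph{nonempty} (it contains $\emptyset$, and also $A \cap B$), and then stops; it treats the openness of the resulting union as implicit in the phrase ``we can think of $A \rightarrow B$ as a union.'' Strictly speaking, non-emptiness of the family is neither sufficient nor even necessary for the join to exist in $\mathcal{O}(X)$ --- the empty join would just be $\emptyset$, which is open --- so the paper's emphasis is somewhat misplaced. You instead put the weight exactly where it belongs: the arbitrary-union axiom guarantees $W = \bigcup S$ lies in $\mathcal{O}(X)$, and the standard fact that joins in a union-closed sub-poset of $(\mathcal{P}(X), \subseteq)$ are computed as set-theoretic unions identifies $W$ as $\bigvee S$. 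Your closing remark, that distributivity of intersection over arbitrary unions yields $A \cap W \subseteq B$ and hence the adjunction $A \wedge C \leq B \iff C \leq A \rightarrow B$, goes beyond what the statement asks but is exactly the content the paper needs later when it interprets $A \rightarrow B$ as the \emph{largest} open set whose intersection with $A$ lies in $B$; the paper never verifies this universal property explicitly. In short, both arguments rest on the same topological axiom, but yours makes the dependence explicit and proves the stronger, more useful claim, while the paper's version only certifies that the union is over a nonempty family.
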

\begin{proof}
    Let $A, B \in \mathcal{O}(X)$. The Heyting implication is defined as $A \rightarrow B := \bigvee\{C \in \mathcal{O}(X) \mid A \wedge C \leq B\}$.
    Knowing that the partial order on $\mathcal{O}(X)$ is given by $\subseteq$, we can think of $A \rightarrow B$ as a union over
    the set of all open sets $C \in \mathcal{O}(X)$ for which the intersection $A \cap C$ is a subset of $B$. If $A \cap C = \emptyset$, then since 
    $\emptyset \in \mathcal{O}(X)$, and $\emptyset \subset B$, $A \rightarrow B$ is trivially non-empty (it contains the empty set). On the 
    other hand, suppose $A \cap C$ is non-empty. Put $C = A \cap B$. Then, $A \cap (A \cap B) \subseteq B$ is trivially true, and 
    thus $A \rightarrow B$ is non-empty in this case as well. This completes the proof that Heyting implications always exist in $\mathcal{O}(X)$.
\end{proof}
With this, we have formally defined a notion of one open set ``implying'' another. However, it is not intuitively clear what this means -- so, let us 
try to develop a more concrete way of understanding this phenomenon.
Let $X$ (the whole space) refer to a ``top'' truth value (1, or simply ``true''), and let $\emptyset$ refer to the ``bottom'' truth value 
(0, or ``false''). If $A \rightarrow B = X$, then $A \cap X \subseteq B$. Since $X$ is the whole space, this implies that $A \subseteq B$, thus, 
$A \leq B$ in the lattice. Conversely, if $A \leq B$, then $A \subseteq B$, thus for any $C \in \mathcal{O}(X)$, $A \cap C \subseteq B$ holds true.
This, of course, means that $A \cap X \subseteq B$, and thus $A \rightarrow B = X$. In other words, $A \rightarrow B$ \textit{if and only if}
$A \leq B$ in $\mathcal{O}(X)$. 

The above realization gives us a spatially intuitive way to think about implication in the ``logic'' of these open sets interpreted as propositions.
For any propositions $A$ and $B$, $A$ implies $B$ if and only if $A$ is a subset of $B$. In other words, if you're in $A$, you're in $B$ by default,
and there's no way to \textit{leave} it, which is exactly what it means for $A$ to imply $B$. If $A \rightarrow B$ and $B \rightarrow A$ are both true, 
this means that $A$ and $B$ occupy exactly the \textit{same} region of the space $X$. This gives us a nice spatial intuition for logical equivalence.
Given that unions correspond to joins and intersections correspond to meets, we can think of logical disjunction and conjunction as \textit{joining}
and \textit{intersecting} regions of space respectively.

We still have yet to characterize the way negation works in this logic of open sets. In classical logic, the negation of a proposition $p$ is defined 
as $\neg p := p \rightarrow 0$. Let us adapt this definition and study it in our context. Consider some open set $A \in \mathcal{O}(X)$. By the classical 
definition, we have $\neg A = \bigcup \{U \in O \mid A \cap U = \emptyset\}$ -- in other words, it is the maximal open set whose intersection 
with $A$ is (a subset of) the empty set. This set admits a special topological characterization: it is the \textit{interior of the complement} of 
$A$. Let us introduce some definitions to make the meaning of this clear.

\begin{definition}
    The \textit{interior} of an open set $A \in \mathcal{O}(X)$, denoted $\mathrm{int}(A)$, is the largest open subset $S \subseteq A$.
\end{definition}
\begin{definition}
    The \textit{complement} $A^c$ of a set $A \in \mathcal{O}(X)$ is the set of all points $x \in X$ in the space such that $x \not \in A$.
    The complement of an open set is called a \textit{closed} set, thus $A^c$ is closed for any $A \in \mathcal{O}(X)$.
\end{definition}
The original motivation for the concepts of open and closed sets came from mathematical analysis -- given the uncountable infinitude 
of the real numbers, one could think of either closed sets containing ``boundary points'', or open sets that did \textit{not} contain 
them. While this concept formalizes only very loosely, the intuition is crucial for understanding the concept of a \textit{closure}. 
Consider the open set $(0, 1) \subset \mathbb{R}$, which does not contain its boundary points of 0 and 1. The corresponding \textit{closed}
set $[0, 1]$ does contain them -- and since the original open set was missing precisely those two points, this is also the \textit{smallest}
closed set containing $(0, 1)$. This is precisely what motivates our next definition.
\begin{definition}
    The \textit{closure} $\mathrm{Cl}(A)$ of a set $A \subseteq X$ is the smallest closed set $A'$ such that $A \subseteq A'$. Formally,
\end{definition}
\begin{align*}
    \mathrm{Cl}(A) = \bigcap \{A' \subseteq X \mid A \subseteq A\}.
\end{align*}
We now return to discussing negation. Given an open set $A$, the negation $\neg A$ is the maximal open set disjoint from $A$. How do 
we obtain this? Of course, we know that $A^c$ is the maximal subset of the space $X$ disjoint from $A$, but this set is not necessarily 
open. However, we know that $A^c$ must be the \textit{closure} of $\neg A$, and so given that the concepts of interior and closure as 
defined above are dual to one another, we see that $\neg A = \mathrm{int}(A^c)$. This definition of negation has an interesting consequence,
which cements the fact that the logic of $\mathcal{O}(X)$ is definitely not classical.

\begin{proposition}
    For $A \in \mathcal{O}(X)$, $A = \neg \neg A$ does not necessarily hold.
\end{proposition}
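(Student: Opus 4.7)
The plan is to disprove the identity by explicit counterexample, since the claim is that $A = \neg\neg A$ fails \emph{in general}; a single well-chosen open set suffices. Given the geometric intuition developed above and the fact that $\neg A = \mathrm{int}(A^c)$, the natural testing ground is $X = \mathbb{R}$ with its Euclidean topology, where interiors, closures, and complements of elementary sets are transparent.

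My first step would be to rewrite $\neg\neg A$ in a more workable topological form. Unfolding $\neg A = \mathrm{int}(A^c)$ twice and invoking the standard duality that the complement of an interior equals the closure of the complement, I would establish the identity
\[
\neg\neg A \;=\; \mathrm{int}(\mathrm{Cl}(A)).
\]
This is the conceptual heart of the matter: double negation is the operation of ``regularizing'' $A$, filling in any interior points of its closure that $A$ itself happens to omit. Once I have this reformulation, producing a counterexample amounts to choosing an open set that obviously misses such a point. The cleanest candidate is $A = (0,1) \cup (1,2) \subset \mathbb{R}$, for which $\mathrm{Cl}(A) = [0,2]$ and hence $\mathrm{int}(\mathrm{Cl}(A)) = (0,2)$. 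Since $1 \in (0,2)$ but $1 \notin A$, we obtain $\neg\neg A = (0,2) \neq A$, as required.

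The only point that genuinely needs care is the rewriting $\neg\neg A = \mathrm{int}(\mathrm{Cl}(A))$, because the paper has not yet recorded the duality $(\mathrm{int}(B))^c = \mathrm{Cl}(B^c)$ as a named result; I would take a sentence or two to verify it from the definitions of interior and closure. Beyond that one bookkeeping step, the main ``obstacle'' is simply taste in choosing the counterexample: one wants an $A$ whose closure has strictly larger interior than $A$, which is exactly what a disconnected open set with an isolated missing interior point provides. Foreshadowing the rest of the paper, this failure of $A = \neg\neg A$ is precisely the phenomenon that distinguishes the Heyting algebra $\mathcal{O}(X)$ from a Boolean algebra and makes its internal logic intuitionistic rather than classical.
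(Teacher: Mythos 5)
Your proof is correct, but it reaches the counterexample by a different route than the paper. The paper works in a three-point space $X = \{a,b,c\}$ with the non-discrete topology $\{\emptyset, \{a\}, \{a,b\}, X\}$, takes $A = \{a,b\}$, and computes directly: $A^c = \{c\}$ is not open, so $\neg A = \mathrm{int}(\{c\}) = \emptyset$, whence $\neg\neg A = X \neq A$. You instead first establish the general identity $\neg\neg A = \mathrm{int}(\mathrm{Cl}(A))$ via the interior--closure duality $(\mathrm{int}(B))^c = \mathrm{Cl}(B^c)$ and then apply it to $A = (0,1)\cup(1,2)$ in $\mathbb{R}$, obtaining $\neg\neg A = (0,2) \neq A$; both the identity and the computation check out (directly: $\mathrm{int}(A^c) = (-\infty,0)\cup(2,\infty)$, whose ``negation'' is $\mathrm{int}([0,2]) = (0,2)$). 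What your approach buys is the conceptual reformulation of double negation as regularization, which explains \emph{why} the identity fails (whenever $A$ is not a regular open set) rather than merely exhibiting one failure; the cost is the extra lemma you correctly flag as needing verification, since the paper has not recorded that duality. What the paper's approach buys is a minimal, fully finite example requiring no auxiliary facts about $\mathbb{R}$ --- and, incidentally, one in which $\neg\neg A$ is the whole space, the most extreme possible failure. Your argument is, if anything, the cleaner of the two: the paper's own writeup contains notational slips (writing $\neg A = \{\emptyset\}$ for $\emptyset$ and $\neg A^c$ for the complement of $\neg A$) that your version avoids.
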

\begin{proof}
    Consider the space $X = \{a, b, c\}$, with the following topology
    \begin{align*}
        \mathcal{O}(X) = \{\emptyset, \{a\}, \{a, b\}, X\}.
    \end{align*}
    This is not the discrete topology, as not every set is open. Put $A = \{a, b\}$. Then, we have $A^c = \{c\}$, and since this 
    is not an open set, $\neg A = \mathrm{int}(A^c) = \{\emptyset\}$. Since $\neg A$ is empty, we have $\neg A^c = X$. However, because $X$ 
    is open, we see that $\mathrm{int}(\neg A^c) = \mathrm{int}(X) = X$, which completes the proof that $A \neq \neg \neg A$ is not always true in the 
    logic of open sets.
\end{proof}
These considerations underline the intuitionistic nature of the logic of open sets. The failure of double negation to generally imply 
equality reflects a logic that is fundamentally different from classical logic. Instead, the lattice $\mathcal{O}(X)$ of open sets embodies
an algebraic structure known as a \textit{Heyting algebra}, which provides a complete semantics for intuitionistic logic. This algebraic 
perspective allows us to interpret conjunction, disjunction, implication, and negation in a way that aligns with the topological behavior 
of open sets.

\begin{definition}
    A \textit{Heyting algebra} $H$ is a bounded distributive lattice characterized by the condition that if $a, b \in H$, then $a \rightarrow b \in H$.
\end{definition}

It is obvious that the lattice $\mathcal{O}(X)$ of open sets of a topological space $X$ that we have been discussing until now is a 
Heyting algebra. Every Heyting algebra is a model of a propositional theory in intuitionistic logic -- however, the proof of 
this is very involved, and given our main focus is paraconsistent logic, we will be omitting it. We will, however, sketch an 
\textit{intuitive} picture of why this is true.

\begin{definition}
    An \textit{algebraic model} of intuitionistic propositional theory $T$ is a triple $\mathbf{M} = \langle H, T, V\rangle$, where $H$ 
    is a Heyting algebra, $T$ is the theory in question, and $V : T \rightarrow H$ is a function assigning formulae in $T$ to elements 
    of $H$. Suppose $T$ is formulated in the language $\mathcal{L} = \{\top, \bot, \vee, \wedge\ , \rightarrow\}$. Let $\varphi \in T$. 
    We may then define $[\varphi]_\mathbf{M}$ inductively as follows:
    \begin{enumerate}
        \item $[\bot]_\mathbf{M} = 0$
        \item $[p]_\mathbf{M} = V(p)$
        \item $[\varphi \vee \psi]_\mathbf{M} = [\varphi]_\mathbf{M} \vee [\psi]_\mathbf{M}$
        \item $[\varphi \wedge \psi]_\mathbf{M} = [\varphi]_\mathbf{M} \wedge [\psi]_\mathbf{M}$
        \item $[\varphi \rightarrow \psi]_\mathbf{M} = [\varphi]_\mathbf{M} \rightarrow [\psi]_\mathbf{M}$
    \end{enumerate} 
    \end{definition}
You may notice that while this algebraic model is well-defined, the syntactic rules of intuitionistic logic and the algebraic 
operations of Heyting algebras are so similar that they are hard to distinguish: for this reason, algebraic logic has often been 
criticized as ``syntax in disguise''. However, just as we could ``algebraize'' the open sets $\mathcal{O}(X)$ of a topological space $X$, 
it is also possible to ``topologize'' a Heyting algebra $H$ and thus obtain a topological semantics for intuitionistic logic.

This process is at the heart of the arguments that Marshall Stone used to prove his celebrated representation theorems for 
various kinds of distributive lattices (including Boolean algebras and Heyting algebras). Let us develop some formalism to 
help articulate these ideas.
\begin{definition}
    A \textit{filter} on a lattice $L$ is a subset $F \subseteq L$ such that if $a \in F$, and $a \leq a'$, then $a' \in F$, and 
    if $a, b \in F$, then $a \wedge b \in F$. \end{definition} 
\begin{definition}
    A filter $F \subseteq L$ is \textit{prime} when $a \vee b \in F$ implies either $a \in F$ or $b \in F$.
\end{definition}
Intuitively, we can think of filters as \textit{upward closed} subsets of a lattice, and prime filters as those which are ``focused''
enough to have join-reducible elements incorporate their constitutents into the filters too. It turns out that the set of \textit{all}
prime filters on a Heyting algebra has some interesting topological structure.
\begin{proposition}
    Let $H$ be a Heyting algebra, and let $X$ be the set of all prime filters on $H$. For $h \in H$, let 
    \begin{align*}
        \beta(h) := \{A \in X \mid h \in A\}.
    \end{align*}
    Then $\mathcal{B} := \{\beta(h) \mid h \in H\}$ generates a topology on $X$.
\end{proposition}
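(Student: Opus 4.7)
The plan is to verify that $\mathcal{B}$ satisfies the conditions of a basis for a topology on $X$: that $\mathcal{B}$ covers $X$, and that the intersection of any two elements of $\mathcal{B}$ is expressible as a union of elements of $\mathcal{B}$. In fact, I expect the stronger statement that $\mathcal{B}$ is itself closed under binary intersections, which makes the verification especially clean.

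First, I would establish the key identity
\begin{align*}
\beta(h_1) \cap \beta(h_2) = \beta(h_1 \wedge h_2).
\end{align*}
The forward inclusion follows because if a prime filter $A$ contains both $h_1$ and $h_2$, then by the filter axiom closing under binary meets, $h_1 \wedge h_2 \in A$. The reverse inclusion follows because $h_1 \wedge h_2 \leq h_1$ and $h_1 \wedge h_2 \leq h_2$, so the upward closure of $A$ forces $h_1, h_2 \in A$ whenever $h_1 \wedge h_2 \in A$. Notably, this identity does not require primeness: the two defining properties of a filter do all the work.

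Next, I would handle the covering condition. Every nonempty prime filter $A$ contains the top element $\top \in H$, since $A$ contains some $a$ with $a \leq \top$ and is upward closed. Hence $\beta(\top) = X$, and $\mathcal{B}$ covers $X$. Combining this with the intersection identity shows that $\mathcal{B}$ satisfies the standard basis axioms, and so the collection of all arbitrary unions of elements of $\mathcal{B}$ is a topology on $X$.

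The main conceptual obstacle is really just recognizing the intersection formula as the crux; once it is in hand, the rest is routine lattice-theoretic bookkeeping. It is worth remarking that primeness of the filters plays no role in establishing that $\mathcal{B}$ is a basis — it will, however, become indispensable once this construction is used to interpret joins and implications in Stone-style representation arguments further on.
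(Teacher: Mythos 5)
Your proposal is correct and follows essentially the same route as the paper: both verify the basis axioms by establishing the identity $\beta(h_1) \cap \beta(h_2) = \beta(h_1 \wedge h_2)$ and then checking the covering condition. The only cosmetic difference is that you cover $X$ all at once via $\beta(\top) = X$, whereas the paper covers each prime filter by picking an arbitrary element of it; your side remark that primeness is not needed for the basis property is also accurate.
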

\begin{proof}
    We will proceed by proving constructively that $\mathcal{B}$ satisfies the properties of a basis for a topology.
    Let $A \in X$. We must find an open set in $\mathcal{B}$ that covers $A$. By definition of a prime filter, $A$ is not empty, which 
    means we may pick some $a \in A$. Then, by construction, the open set $\beta(a) \in \mathcal{B}$ covers $A$. 

    Now, let $\beta(a), \beta(b) \in \mathcal{B}$, and $A \in \beta(a) \cap \beta(b)$. Then $a \in A$ and $b \in A$, and thus 
    $a \wedge b \in A$ by definition of a prime filter. By construction, we have that $A \in \beta(a \wedge b)$. It is clear that 
    $\beta(a \wedge b) = \beta(a) \cap \beta(b)$ as sets: filters in the set on the LHS must contain $a$ and $b$ by the upward
    closure of filters, whereas filters on the RHS must contain $a \wedge b$ by the definition of a prime filter.

    Since we may cover any prime filter of $H$ with an open set in $\mathcal{B}$, and since any prime filter covered by the intersection of some 
    open sets in $\mathcal{B}$ may be covered by another open set which is a equal to that intersection, we may conclude that 
    $\mathcal{B}$ is a basis for a topology on $X$, and thus generates a topology on $X$.
\end{proof}
Now we may prove the following theorem.
\begin{theorem}
    Stone's representation theorem for Heyting algebras. 

    Let $H$ be a Heyting algebra. Then $H$ embeds into the lattice of open sets $\mathcal{O}(X)$ for a topological space $X$.
\end{theorem}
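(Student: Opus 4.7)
The plan is to use the space $X$ of prime filters together with the topology generated by the basis $\mathcal{B} = \{\beta(h) \mid h \in H\}$ constructed in the preceding proposition, and to show that the assignment $\beta : H \to \mathcal{O}(X)$, $h \mapsto \beta(h)$, is an injective lattice homomorphism, thereby realizing $H$ as a sublattice of $\mathcal{O}(X)$.

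First I would verify that $\beta$ preserves the lattice operations. The identity $\beta(a \wedge b) = \beta(a) \cap \beta(b)$ has already been established in the course of showing that $\mathcal{B}$ is a basis. For joins, I would show $\beta(a \vee b) = \beta(a) \cup \beta(b)$: the inclusion $\supseteq$ follows from upward closure, and the reverse inclusion is exactly the defining property of primeness, since a prime filter containing $a \vee b$ must contain $a$ or $b$. The bounds $\beta(\top) = X$ and $\beta(\bot) = \emptyset$ are immediate from the fact that every filter contains $\top$ and no proper filter contains $\bot$.

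The crux of the argument, and the part I expect to be the main obstacle, is proving that $\beta$ is injective. For this I would establish a prime filter separation lemma: whenever $a \not\leq b$ in $H$, there exists a prime filter $F$ with $a \in F$ and $b \notin F$. The standard route is to take the principal filter $\uparrow\! a$ together with the ideal $\downarrow\! b$ (which are disjoint precisely because $a \not\leq b$), and apply Zorn's lemma to the family of filters containing $a$ and disjoint from $\downarrow\! b$, extracting a maximal such filter. One then shows, using distributivity of $H$, that such a maximal filter is necessarily prime: if $x \vee y$ lies in it but neither $x$ nor $y$ does, the filters generated by adjoining $x$ or $y$ would both meet $\downarrow\! b$, and a distributivity argument would force the original filter to meet $\downarrow\! b$ too, a contradiction. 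This is the only genuinely nontrivial step, as it relies on the axiom of choice and the full strength of distributivity.

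Granting the separation lemma, injectivity follows immediately: if $a \neq b$ then without loss of generality $a \not\leq b$, and the separating prime filter $F$ witnesses $F \in \beta(a) \setminus \beta(b)$, so $\beta(a) \neq \beta(b)$. Combining this with the homomorphism properties above yields the desired embedding of $H$ into $\mathcal{O}(X)$. I would note in closing that while $\beta$ preserves the lattice structure and bounds, it need not in general preserve the Heyting implication on the nose, which is why the statement is phrased as an embedding into $\mathcal{O}(X)$ rather than as a Heyting algebra isomorphism.
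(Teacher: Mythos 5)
Your proposal follows the same route as the paper: the space of prime filters with the basis $\mathcal{B} = \{\beta(h) \mid h \in H\}$, the map $h \mapsto \beta(h)$, and the verification that meets go to intersections and joins to unions via upward closure and primeness. The one substantive difference is that you supply the step the paper's proof omits entirely: injectivity. The paper simply asserts that $\varphi$ is ``an injective homomorphism of lattices'' after checking the two preservation identities, whereas you correctly identify injectivity as the crux and prove it via the prime filter separation lemma --- given $a \not\leq b$, extend the principal filter $\uparrow\! a$ by Zorn's lemma to a filter maximal among those disjoint from $\downarrow\! b$, and use distributivity to show maximality forces primeness. This is exactly the argument needed (it is where the axiom of choice and distributivity genuinely enter), so your version is the more complete of the two; the paper's reader must take injectivity on faith or reconstruct precisely the lemma you state. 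Your closing remark that $\beta$ need not preserve Heyting implication, so that one only gets a lattice embedding rather than a Heyting algebra isomorphism onto its image, is also a fair caveat consistent with how the paper itself uses the theorem.
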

\begin{proof}
    From Proposition 1.0.5, we know that there is a topology on the set $X$ of all prime filters of $H$ generated by the basis 
    $\mathcal{B} = \{\beta(h) \mid h \in H\} \subseteq \mathcal{O}(X)$. Define a map $\varphi : H \rightarrow \mathcal{O}(X)$
    with $\varphi(h) = \beta(h)$ for $h \in H$. Then, we have for $a, b \in H$ that 
    \begin{align*}
        \varphi(a \wedge b) = \beta(a \wedge b) = \beta(a) \cap \beta(b) \\
        \varphi(a \vee b) = \beta(a \vee b) = \beta(a) \cup \beta(b). \\
    \end{align*}
    Thus $\varphi$ is an injective homomorphism of lattices, which completes the proof.
\end{proof}

Thus, any Heyting algebra embeds into a \textit{subset} of the lattice of open sets of some topological space. If the Heyting algebra 
in question is finite, this embedding is actually an isomorphism. It is for \textit{infinite} lattices that we run into a problem: 
suppose we have a countably infinite sized Heyting algebra -- the prime filters on it could generate an \textit{uncountably} large 
lattice of open sets, making a surjection impossible to construct. Nevertheless, the theorem reassures us that an \textit{embedding}
can always exist, and this suffices for our purposes: from \textit{any} Heyting algebra, we can always obtain a sublattice of open sets of a 
topological space. This structure is complete as a semantics for propositional intuitionistic logic too (see Bezhanishvili and Holliday \cite{BEZHANISHVILI2019403} for a 
proof). Thus, we may ``topologize'' any Heyting algebra in this way to study it. It is then easy to see that if some formula of 
intuitionistic propositional logic $\varphi$ holds in $H$, it must hold in $H \mapsto \beta(H)$ as well, and thus the topological 
semantics we obtain are complete. Given this, we are once more able to freely exchange algebraic and topological notions 
in our analysis. 

Let us proceed to understand how we might assign \textit{truth values} to elements of a Heyting algebra considered 
as propositions. We introduce some formalism.
\begin{definition}
    A \textit{homomorphism} of Heyting algebras $H$ and $K$ is a map $\varphi : H \rightarrow K$ that preserves joins, meets and 
    Heyting implications (i.e $\varphi(a \vee b) = \varphi(a) \vee \varphi(b)$, and similarly for the other operations).
\end{definition}
\begin{definition}
    Let $H$ be a Heyting algebra. A \textit{truth value assignment} (which we will also call \textit{extensional homomorphisms})
    is a surjective Heyting algebra homomorphism $\varphi : H \rightarrow 
    \{0, 1\}$.
\end{definition}
\begin{definition}
    Let $\varphi : H \rightarrow K$ be a homomorphism of Heyting algebras (if $K \neq \{0, 1\}$, we may also call this an
    \textit{intensional homomorphism}). Then, the \textit{kernel} of the homomorphism is defined as follows
    \begin{align*}
        \ker \varphi = \{h \in H \mid \varphi(h) = 0_K\}.
    \end{align*}
\end{definition}
Logically speaking, we can interpret homomorphisms $\varphi : H \rightarrow K$ of Heyting algebras as ways to take $K$ as a 
semantic model for the sentences in $H$. The kernel $\ker \varphi$ can be thought of as the set of all $H$-sentences that are 
\textit{falsified} by $K$ (indeed, that is what mapping to $0_K$ \textit{is}, in a sense). If $\varphi$ maps all of $H$ onto $0_K$,
we say it is the \textit{trivial} homomorphism. The existence of a nontrivial homomorphism between two Heyting algebras implies 
that there is a way to logically interpret the domain's theory in the codomain. Conversely, if the only homomorphism that exists between 
two Heyting algebras is trivial, there is \textit{no} way to interpret the domain's theory in the codomain. The formalism of homomorphisms
also allows us to construct more intricate sorts of relationships between intuitionistic propositional theories.
\begin{definition}
    A \textit{ideal} on a lattice $L$ is a subset $I \subseteq L$ such that if $a \in I$, and $a \geq a'$, then $a' \in I$, and 
    if $a, b \in I$, then $a \vee b \in I$. 
\end{definition}
It is easy to see that the notion of an ideal is \textit{dual} to that of a filter. It is actually possible to construct Heyting algebras 
from ideals or filters that \textit{validate} or \textit{falsify} certain sentences, due to their downward-closed and upward-closed 
natures respectively. This is done with the machinery of quotient maps.

\begin{definition}
    Let $H$ be a Heyting algebra, and let $I$ be an ideal of $H$. Then the \textit{quotient} $H/I$ is the set of all equivalence classes 
    on elements of $H$ relative to the relation $R$, where $xRy$ when $x \rightarrow y \in I \iff y \rightarrow x \in I$ for $x, y \in H$.
    It is easy to see that $H/I$ is a Heyting algebra, with joins and meets defined as unions and intersections of equivalence classes.
\end{definition}
\begin{proposition}
    Let $H$ be a Heyting algebra, and let $I$ be an ideal of $H$.
    Then the quotient map $\varphi : H \rightarrow H/I$ is a surjective homomorphism of Heyting algebras.
\end{proposition}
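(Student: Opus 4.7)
The plan is to separate the two assertions. First, define $\varphi: H \to H/I$ by $\varphi(x) := [x]_R$, the equivalence class of $x$ under the relation $R$ used to construct the quotient. Surjectivity is then immediate, since every element of $H/I$ is by construction an equivalence class $[x]_R$, which is precisely $\varphi(x)$.

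The substantive part is verifying that $\varphi$ is a homomorphism of Heyting algebras, which reduces to showing that the relation $R$ is a congruence with respect to $\vee$, $\wedge$, and $\to$. The cases of $\vee$ and $\wedge$ are handled essentially by the definition of the quotient: the identities $[x \vee y]_R = [x]_R \vee [y]_R$ and $[x \wedge y]_R = [x]_R \wedge [y]_R$ are built into how the operations on $H/I$ are declared, so here I would only need to unpack definitions and translate them into the homomorphism clauses $\varphi(x \vee y) = \varphi(x) \vee \varphi(y)$ and similarly for $\wedge$.

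The main obstacle is compatibility with Heyting implication: from $x_1 R x_2$ and $y_1 R y_2$ I must deduce $(x_1 \to y_1) R (x_2 \to y_2)$. My strategy is to unpack the congruence hypotheses as statements about membership in $I$, and then to use standard Heyting identities—most notably $(a \vee b) \to c = (a \to c) \wedge (b \to c)$ and $a \to (b \wedge c) = (a \to b) \wedge (a \to c)$—to decompose the relevant implications $(x_1 \to y_1) \to (x_2 \to y_2)$ and its converse into combinations of the ingredients supplied by the hypotheses. The downward closure of $I$ together with closure under finite joins should then yield the required membership. I expect this to be the delicate point: ideals are closed downward while Heyting implications move upward under monotone substitutions, so the inequalities must be oriented carefully. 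If the identity-based route runs into trouble, I would fall back on a representative-witness argument, choosing explicit elements of $I$ guaranteed by the hypotheses and building a witness for the conclusion by hand.
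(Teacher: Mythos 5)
Your overall architecture matches the paper's: define $\varphi(x)=[x]_R$, get surjectivity for free, and dispose of $\vee$ and $\wedge$ by unpacking the quotient operations. The divergence is in the implication step. You propose to verify directly that $R$ is a congruence for $\to$, i.e.\ that $x_1\,R\,x_2$ and $y_1\,R\,y_2$ imply $(x_1\to y_1)\,R\,(x_2\to y_2)$. The paper instead takes it as given (from the preceding definition) that $H/I$ is already a Heyting algebra and characterizes the quotient implication by an adjunction-style condition, never checking that $\to$ is well defined on equivalence classes. Your route is the more honest one: it places the burden exactly where it belongs.

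Unfortunately, the step you flag as ``the delicate point'' is not merely delicate --- it is where the argument genuinely fails, because congruences of Heyting algebras correspond to \emph{filters}, not ideals. Concretely, let $H$ be the three-element chain $0<a<1$ and $I=\{0,a\}$. Any equivalence relation compatible with $\to$ that identifies $a$ with $0$ must also identify $a\to a=1$ with $a\to 0=0$, collapsing $H$ entirely; so no nontrivial Heyting-algebra quotient collapses this ideal, and the induced map cannot be a Heyting homomorphism in the sense claimed. Your planned identity-based derivation is exactly the one that succeeds for a filter $F$ with the relation $(x\to y)\wedge(y\to x)\in F$, because upward closure of $F$ orients the inequalities correctly; with an ideal the downward closure points the wrong way, precisely the asymmetry you anticipated, and no representative-witness fallback can repair it since the required congruence does not exist. (A further symptom: the relation $(x\to y)\wedge(y\to x)\in I$ used in the paper's proof is not even reflexive unless $1\in I$.) So the gap lies in the proposition itself as much as in your plan; the correct statement, and the one your strategy proves, is the filter version.
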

\begin{proof}
    Define $\varphi : H \rightarrow H/I$ with $\varphi(x) = [x]_R$ for $x \in H$, where $x R y \iff (x \rightarrow y) \wedge (y \rightarrow x) \in I$. 
    Then $\varphi$ is surjective. We will show $\varphi$ is a homomorphism of Heyting algebras. First, notice joins and meets are preserved
    \begin{align*}
    \varphi(x \vee y) &= [x \vee y]_R = [x]_R \cup [y]_R = \varphi(x) \vee \varphi(y), \\
    \varphi(x \wedge y) &= [x \wedge y]_R = [x]_R \cap [y]_R = \varphi(x) \wedge \varphi(y).
    \end{align*}
    For implication, observe that $\varphi(x \rightarrow y) = [x \rightarrow y]_R$. Since $H/I$ is a Heyting algebra, the equivalence 
    class $[z]_R$ for any $z$ satisfies:
    $$
    [z]_R \leq [x]_R \rightarrow [y]_R \iff z \rightarrow (x \rightarrow y) \in I.
    $$
    Thus, $\varphi(x \rightarrow y) = \varphi(x) \rightarrow \varphi(y)$. Therefore, $\varphi$ is a surjective homomorphism of Heyting algebras,
    which completes the proof.
\end{proof}
One can take quotients by \textit{filters} on a Heyting algebra as well, and define the quotient map in a symmetric way. Taking a 
quotient by an ideal $I \subseteq H$ \textit{collapses} all the elements of the ideal onto the bottom element $0_H$, whereas taking a quotient 
by a filter collapses the elements of the ideal onto the \textit{top} element $1_H$. In logical terms, we can think of quotient structures 
as Heyting algebras with filters or ideals to be reflecting how the \textit{rejection} or \textit{affirmation} impacts other sentences 
of the theory. Thus, often, this quotient structure will be \textit{simpler} than the original structure. Thinking of filters as sets 
of ``provable'' statements and ideals as sets of ``unprovable'' statements, we may thus ``factor'' Heyting algebras $H$ by quotient 
structures as follows, assuming we have ideals $I_0, \dots, I_n$ and filters $F_0, \dots, F_m$ in $H$.
\begin{center}
\[\begin{tikzcd}
	&& {H/I_0} & \cdots \\
	H &&&&& {\{0, 1\}} \\
	&& {H/F_0} & \cdots
	\arrow[from=1-3, to=1-4]
	\arrow[dashed, from=1-4, to=2-6]
	\arrow[from=2-1, to=1-3]
	\arrow[from=2-1, to=3-3]
	\arrow[from=3-3, to=3-4]
	\arrow[dashed, from=3-4, to=2-6]
\end{tikzcd}\]
\end{center}
Such a diagram can be drawn for any choice of starting ideal or filter. In fact, the diagram shows that we could even \textit{alternate} 
between taking quotients with ideals and filters, because we are guaranteed to reach the two-element Heyting algebra with a sequence of 
either just ideal or filter quotients (an inductive argument is naturally made given this). Logically speaking, because ideals and 
filters are directionally closed, we see that their members always map to either 0 (for ideals)
or 1 (for filters) in homomorphisms from their parent lattices to $\{0, 1\}$ -- in other words, ideals are sets of \textit{false} sentences, and 
filters are sets of \textit{true} sentences (with intuitionistic interpretations of the truth values, of course), all closed under implication.
The diagram above suggests it is always possible to \textit{collapse} such sets to single elements of a Heyting algebra, thus simplifying logical 
theories -- indeed, until one obtains the trivial theory.

It is of course possible to construct quotient Heyting algebras with \textit{arbitrary} equivalence relations that respect the Heyting algebra 
structure (if the quotient maps do not respect this structure, there's a possibility their images \textit{won't} be Heyting algebras). Such 
equivalence relations are called \textit{congruences}. 
\begin{definition}
    An equivalence relation $R$ on a lattice $L$ is a \textit{congruence} if it respects the operations of the lattice
    ,i.e $aRb$ and $cRd$ imply $(a \wedge c) R (b \wedge d)$, and similarly for other operations, for $a, b, c, d \in L$.
\end{definition}

It is easy to see that taking quotients by congruences allows for the construction of new Heyting algebras by collapsing certain formulae 
as logically \textit{equivalent} -- even if they aren't intuitionistically equivalent, the fact that the congruence in an equivalence relation 
allows for a ``simulation'' of if they were. 

\section{Abstract duality and functors}

We have spent considerable time discussing maps between Heyting algebras that respect their \textit{algebraic} structure, and how they 
can be used to construct new Heyting algebras from old ones. How should we topologize these notions? \textit{Can} they be topologized?
It turns out that Heyting algebra homomorphisms and maps of topological spaces share a complicated relationship, which 
we will now characterize, continuing to follow Johnstone. Basic facts about category theory are taken from Riehl \cite{riehl_2016_category}. 
\begin{definition}
    Let $(X, \sigma)$ and $(Y, \tau)$ be tuples of topological spaces with their associated topologies. A \textit{continuous map} is a 
    function $f : X \rightarrow Y$ such that $f^{-1}(U) \in \sigma$ is open for any $U \in \tau$. 
\end{definition}

\begin{definition}
    For $(X, \sigma)$ and $(Y, \tau)$, a function $f : X \rightarrow Y$ is an \textit{open map} if for any $U \in \sigma$, 
    $f(U)$ is open in $Y$ (i.e $f(U) \in \tau$).
\end{definition}

\begin{proposition}
    Let $\varphi : H \rightarrow K$ be a homomorphism of Heyting algebras, and suppose $H \hookrightarrow \mathcal{O}(X)$ 
    and $K \hookrightarrow \mathcal{O}(Y)$, for topological spaces $X$ and $Y$ by Theorem 1.0.1. Then, $\varphi$ induces a 
    continuous map $f : Y \rightarrow X$.
\end{proposition}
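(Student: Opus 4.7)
The plan is to use the explicit description of $X$ and $Y$ provided by Stone's theorem: both are spaces of prime filters, equipped with the topology generated by the basis sets $\beta_H(h) = \{A \in X \mid h \in A\}$ (and analogously $\beta_K(k)$ for $Y$). The natural map in the opposite direction to $\varphi$ is given by preimage: for each prime filter $F \subseteq K$, set
\begin{align*}
    f(F) := \varphi^{-1}(F) = \{h \in H \mid \varphi(h) \in F\}.
\end{align*}
So the real content of the proposition is (i) that $\varphi^{-1}(F)$ is again a prime filter, so $f$ really lands in $X$, and (ii) that $f$ is continuous with respect to the Stone topologies.

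For (i), I would verify the three conditions in turn, in each case translating back and forth through $\varphi$. Upward closure follows because $\varphi$ preserves meets, hence the induced order: if $h \in \varphi^{-1}(F)$ and $h \leq h'$, then $\varphi(h) \leq \varphi(h')$, and the upward closure of $F$ puts $\varphi(h')$, hence $h'$, in $\varphi^{-1}(F)$. Closure under binary meets uses that $\varphi(a \wedge b) = \varphi(a) \wedge \varphi(b)$, together with the meet-closure of $F$. Primeness uses $\varphi(a \vee b) = \varphi(a) \vee \varphi(b)$ together with the primeness of $F$. One should also note that $\varphi^{-1}(F)$ is nonempty: since $\varphi$ preserves Heyting implication, $\varphi(1_H) = \varphi(a \rightarrow a) = \varphi(a) \rightarrow \varphi(a) = 1_K$, and $1_K$ lies in every (nonempty, upward-closed) prime filter $F$.

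For (ii), I would compute $f^{-1}$ directly on basis elements. Given any $h \in H$, unpacking the definition gives
\begin{align*}
    f^{-1}(\beta_H(h)) = \{F \in Y \mid h \in \varphi^{-1}(F)\} = \{F \in Y \mid \varphi(h) \in F\} = \beta_K(\varphi(h)),
\end{align*}
which is a basis element for the topology on $Y$, hence open. Since $f^{-1}$ carries a subbasis (in fact a basis) for the topology on $X$ to open sets of $Y$, $f$ is continuous.

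The main obstacle is really a conceptual one rather than a technical one: identifying \emph{which} direction the induced map should go and \emph{what} the correct pointwise definition is. Once one sees that the Stone construction turns the algebra into a space of prime filters and that homomorphisms should pull back filters, the verifications are entirely mechanical and use only the defining properties of homomorphisms of lattices (Heyting implication preservation, amusingly, is needed only for the minor point that the empty set is not a value of $f$). No use is made of $H \hookrightarrow \mathcal{O}(X)$ being an embedding in any essential way beyond the identification of $X$ as the prime filter space from Proposition~1.0.5.
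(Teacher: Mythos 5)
Your proposal is correct and follows essentially the same route as the paper: define $f(P) = \varphi^{-1}(P)$ on prime filters and verify continuity via the computation $f^{-1}(\beta(h)) = \beta(\varphi(h))$ on basis elements. The only difference is that you spell out the verification that $\varphi^{-1}(F)$ is again a prime filter, which the paper asserts in one sentence without proof.
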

\begin{proof}
    Recall that the points of $X$ are prime filters on $H$, and the points of $Y$ are prime filters on $K$. Define 
    $f : Y \rightarrow X$ with $f(P) = \varphi^{-1}(P)$, for a prime filter $P \in Y$ associated with the lattice $K$.
    Since the preimage of a prime filter is also a prime filter, we have that $f(P)$ is prime, thus $f$ is well-defined.

    We proceed by proving that the preimage under $f$ of an open set in $\tau$ is open. Let $\beta(h) \in \sigma$. 
    By the formalism developed in our proof of Stone's representation theorem, we have $\beta(h) = \{P \in X \mid h \in P\}$.
    Take the preimage $f^{-1}(\beta(h)) = \{Q \in Y \mid f(Q) \in \beta(h)\}$. Since $f(Q) = \varphi^{-1}(Q)$, we may write 
    $f^{-1}(\beta(h)) = \{Q \in Y \mid \varphi^{-1}(Q) \in \beta(h)\}$. Since $\varphi^{-1}(Q) \in \beta{h}$ implies that 
    $h \in \varphi^{-1}(Q)$, we have $f^{-1}(\beta(h)) = \{Q \in Y \mid h \in \varphi^{-1}(Q)\}$. By the definition of a preimage 
    under $\varphi$, we see that $h \in \varphi^{-1}(Q)$ if and only if $\varphi(h) \in Q$. Thus, we may write 
    \begin{align*}
        f^{-1}(\beta(h)) = \{Q \in Y \mid \varphi(h) \in Q\} = \beta(\varphi(h)).
    \end{align*}
    Put $Y = \beta(\varphi(h))$. Then $Y$ is precisely the open set corresponding to the element $\varphi(h) \in K$. We conclude that 
    the preimage of an open set under $f$ is open.
\end{proof}
It is important to note that not every continuous map of topological spaces induces a Heyting algebra homomorphism, as there is no
requirement that continuous maps preserve \textit{implication}. However, a continuous map will always induce a \textit{lattice} 
homomorphism (in fact, a \textit{frame} homomorphism) between the lattices of open sets of the topological spaces in its preimage 
and image, i.e. if $f : Y \rightarrow X$ is the continuous map in question, then we get the lattice homomorphism 
$\varphi : \mathcal{O}(X) \rightarrow \mathcal{O}(Y)$. There is a condition for a continuous map inducing a Heyting algebra homomorphism, 
which we will prove now.
\begin{proposition}
    A continuous map $f : Y \rightarrow X$ induces a Heyting algebra homomorphism $\varphi : \mathcal{O}(X) \rightarrow \mathcal{O}(Y)$
    if and only if $f$ is an open map.
\end{proposition}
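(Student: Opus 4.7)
The plan is to work with the inverse-image map $\varphi(U) = f^{-1}(U)$, which the preceding discussion established is always a lattice homomorphism. The proposition then reduces to characterizing when $\varphi$ additionally preserves Heyting implication.

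For the forward direction, assume $f$ is open, and prove $\varphi(U \rightarrow V) = \varphi(U) \rightarrow \varphi(V)$ by two inclusions. The inclusion $\varphi(U \rightarrow V) \subseteq \varphi(U) \rightarrow \varphi(V)$ actually holds for \emph{every} continuous $f$: from $(U \rightarrow V) \cap U \subseteq V$ in $\mathcal{O}(X)$, apply $f^{-1}$ (which preserves finite meets) and then invoke the universal property of $\rightarrow$ in $\mathcal{O}(Y)$. The reverse inclusion is where openness enters: setting $Z := \varphi(U) \rightarrow \varphi(V)$, we have $Z \cap f^{-1}(U) \subseteq f^{-1}(V)$, so the set-theoretic identity $f(Z \cap f^{-1}(U)) = f(Z) \cap U$ gives $f(Z) \cap U \subseteq V$. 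Because $f$ is open, $f(Z)$ is open in $X$, and the universal property of the Heyting implication in $\mathcal{O}(X)$ delivers $f(Z) \subseteq U \rightarrow V$, whence $Z \subseteq f^{-1}(f(Z)) \subseteq f^{-1}(U \rightarrow V)$.

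For the converse, I would argue by contrapositive: if $f$ is not open, exhibit $U, V \in \mathcal{O}(X)$ that break the equation. Pick an open $W \subseteq Y$ with $f(W)$ not open in $X$, and a point $x_0 \in f(W) \setminus \mathrm{int}(f(W))$; then $x_0 \in \overline{X \setminus f(W)}$. The strategy is to construct opens $U, V \subseteq X$ such that $U \setminus V$ is a locally closed subset of $X \setminus f(W)$ whose closure contains $x_0$. Such a pair gives $f(W) \cap U \subseteq V$ (so $W \subseteq \varphi(U) \rightarrow \varphi(V)$) yet $x_0 \notin \mathrm{int}(U^c \cup V) = U \rightarrow V$ (so $W \not\subseteq \varphi(U \rightarrow V)$), contradicting preservation of implication.

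The hard part will be the explicit construction in the converse. For $T_1$ first-countable $X$, one can take a sequence $\{x_n\} \subseteq X \setminus f(W)$ converging to $x_0$ and set, for instance, $U = X \setminus \{x_0\}$ and $V = X \setminus \overline{\{x_n : n \geq 1\}}$, giving $U \setminus V = \{x_n\}$ with $x_0$ in its closure. Handling a general $X$ requires adapting this argument using nets or filters, which is the most delicate technical step.
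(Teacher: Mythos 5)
Your first direction is correct and complete: the inclusion $\varphi(U \rightarrow V) \subseteq \varphi(U) \rightarrow \varphi(V)$ for arbitrary continuous $f$ (push $(U\rightarrow V)\cap U \subseteq V$ through $f^{-1}$ and use the universal property), followed by the reverse inclusion via the identity $f(Z \cap f^{-1}(U)) = f(Z) \cap U$ and openness of $f(Z)$, is exactly the right argument. It is in fact tighter than the paper's treatment of that direction, which simply asserts that $f^{-1}(U \rightarrow V)$ is the largest open $W'$ with $W' \cap f^{-1}(U) \subseteq f^{-1}(V)$.

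The converse is where the genuine gap lies, and you have correctly isolated it: everything reduces to producing a locally closed set $L = U \setminus V$ with $L \subseteq X \setminus f(W)$ and $x_0 \in \mathrm{cl}(L)$. The problem is that no such $L$ need exist; the converse direction is actually false for general spaces. Take $X = \{a,b\}$ with the indiscrete topology, $Y = \{\ast\}$ a one-point space, and $f(\ast) = a$. Then $\mathcal{O}(X)$ and $\mathcal{O}(Y)$ are both the two-element Heyting algebra and $f^{-1}$ is an isomorphism between them, yet $f(Y) = \{a\}$ is not open. Here the only locally closed subsets of $X$ are $\emptyset$ and $X$, so your construction has nothing to work with, and no passage to nets or filters can repair this: the obstruction is the poverty of the lattice $\mathcal{O}(X)$, not a failure of sequential methods. (The paper's own argument for this direction is also not valid, since it forms the implication $U \rightarrow V_y$ in $\mathcal{O}(X)$ with $U = f(V)$, presupposing the openness it is trying to prove.) Your $T_1$ plus first-countable case is essentially the right statement to aim for, though even there you should assume Hausdorffness or argue more carefully, since in a non-Hausdorff space $\mathrm{cl}(\{x_n : n \geq 1\})$ may meet $f(W)$ at points other than $x_0$, breaking the containment $f(W) \cap U \subseteq V$. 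The clean general fix is either to impose a separation hypothesis of this kind on $X$, or to weaken the conclusion to openness of the induced map of locales, which is what preservation of the Heyting structure actually detects.
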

\begin{proof}
    Let $f: Y \rightarrow X$ be a continuous map, and let $\varphi: \mathcal{O}(X) \rightarrow \mathcal{O}(Y)$ be the induced map defined by $\varphi(U) = f^{-1}(U)$ for $U \in \mathcal{O}(X)$.

    Suppose $\varphi$ is a Heyting algebra homomorphism. To show that $f$ is an open map, consider any open set $V \subseteq Y$. We need to show that $f(V)$ is open in $X$.
    Since $\varphi$ is a Heyting algebra homomorphism, it preserves finite meets (i.e. intersections) and finite joins (i.e. unions). Importantly, it also preserves the implication operation $\rightarrow$ defined in the Heyting algebra structure.
    Let $U = f(V)$, where $V$ is an open set in $Y$. To show that $U$ is open in $X$, take any $x \in U$. Then $x = f(y)$ for some $y \in V$. Consider the open set $V_y = V$ and the corresponding open set $U_y = f(V_y)$ in $X$.
    The crucial observation here is that the Heyting implication $U \rightarrow V_y$ is defined by the largest open set $W$ such that $W \cap U \subseteq V_y$. Since $\varphi$ is a Heyting algebra homomorphism, it preserves this implication operation.
    Thus, $\varphi(U \rightarrow V_y) = \varphi(U) \rightarrow \varphi(V_y)$, and by continuity of $f$, $\varphi(U)$ is open in $Y$. Hence, $f(V_y)$ is open in $X$, and $U = f(V)$ is open in $X$. Therefore, $f$ is an open map.

    Conversely, suppose $f$ is an open map. We need to show that the induced map $\varphi: \mathcal{O}(X) \rightarrow \mathcal{O}(Y)$ is a Heyting algebra homomorphism.
    Since $f$ is open, for any open set $U \subseteq X$, $f^{-1}(U)$ is open in $Y$. The map $\varphi$ defined by $\varphi(U) = f^{-1}(U)$ preserves finite meets (intersections) and finite joins (unions) by the properties of preimages under continuous maps.
    Moreover, the implication operation $U \rightarrow V$ in $\mathcal{O}(X)$ corresponds to the largest open set $W$ such that $W \cap U \subseteq V$. Since $f$ is open, the preimage $\varphi(U \rightarrow V)$ is the largest open set $W'$ in $\mathcal{O}(Y)$ such that $W' \cap f^{-1}(U) \subseteq f^{-1}(V)$, and thus $\varphi(U \rightarrow V) = \varphi(U) \rightarrow \varphi(V)$.
    Therefore, $\varphi$ preserves the implication operation, and thus $\varphi$ is a Heyting algebra homomorphism.
    This completes the proof that $f$ induces a Heyting algebra homomorphism $\varphi$ if and only if $f$ is an open map.
\end{proof}
Now, we are finally at a place where we can conclude our discussion on intuitionistic logic by considering, in full generality, how 
its algebraic and topological representations interact with one another. We have described the conditions under which continuous maps 
induce Heyting algebra homomorphisms, and vice versa. The language of categories and functors gives us a systematic language which 
lets us discuss in great detail exactly \textit{how} these conditions interact -- it does this by giving us a way to rigorously 
discuss various kinds of mathematical objects mapped onto each other in different ways.
\begin{definition}
    A \textit{category} $\mathcal{C}$ consists of a collection of objects $O$, and maps $M$ between them, called \textit{morphisms}.
    Identity morphisms from objects to themselves always exist, and composition of morphisms is associative.
\end{definition}
\begin{definition}
    Let $\mathcal{C}$ and $\mathcal{D}$ be categories. A \textit{functor} $F : \mathcal{C} \rightarrow \mathcal{D}$ maps objects of 
    $\mathcal{C}$ to objects of $\mathcal{D}$, and morphisms of $\mathcal{C}$ to morphisms of $\mathcal{D}$, such that 
    $F(\text{id}_X) = \textit{id}_{F(x)}$ for objects $X$ of $\mathcal{C}$, and $F(f \circ g) = F(f) \circ F(g)$, which makes $F$ a \textit{covariant}
    functor, or $F(f \circ g) = F(g) \circ F(f)$, which makes $F$ a \textit{contravariant} functor. A functor's contravariance will be 
    denoted by writing it as $F : \mathcal{C} \rightarrow \mathcal{D}^{op}$. 
\end{definition}

Categories and functors are the perfect conceptual tool for framing our discussion of ``algebraizing'' and ``topologizing'': such 
actions can simply be seen as functorial. 

First, notice that Heyting algebras form a category, with the objects as Heyting algebras themselves, and morphisms given by 
Heyting algebra homomorphisms. Denote this category with $\mathbf{Heyt}$. For any two Heyting algebras $H$ and $K$, we will write 
$\Hom_{\mathbf{Heyt}}(H, K)$ to denote the collection of all homomorphisms between them (when the category we are working with is 
obvious, we will drop the subscript). Similarly, topological spaces form the objects of a category, which we will call $\mathbf{Top}$,
with morphisms given by continuous maps. 

Earlier, we noted that while Heyting algebra homomorphisms always induce continuous maps, continuous maps do not always induce 
Heyting algebra homomorphisms because of possible failures in preserving implication. As noted briefly before, however, they \textit{do}
induce homomorphisms of complete lattices, as completeness is a condition that can be phrased entirely in terms of joins and meets.
We will say that continuous maps thus induce \textit{frame} homomorphisms, and denote the relevant category as $\mathbf{Frm}$. 
It is easy to show (though we will not do so here), that a frame is always a complete Heyting algebra, so this simplifies our work 
considerably. We are now armed with enough information to construct our first functor.

\begin{proposition}
    The map $F : \mathbf{Top} \rightarrow \mathbf{Frm}^{op}$ is a contravariant functor.
\end{proposition}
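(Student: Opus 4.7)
The plan is to give an explicit definition of the functor $F$ and then verify the three conditions (well-definedness on morphisms, preservation of identities, and reversal of composition) in turn. On objects, I send a topological space $X$ to its frame of open sets $\mathcal{O}(X)$; by the axioms of a topology, $\mathcal{O}(X)$ is closed under arbitrary joins and finite meets, making it a frame (and, as noted in the text, a complete Heyting algebra), so it lies in $\mathbf{Frm}$. On morphisms, given a continuous map $f : X \rightarrow Y$, I send it to the preimage operator $f^{-1} : \mathcal{O}(Y) \rightarrow \mathcal{O}(X)$; continuity is precisely the condition that $f^{-1}(U) \in \mathcal{O}(X)$ whenever $U \in \mathcal{O}(Y)$, so the assignment is well-typed.

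First, I would verify that $f^{-1}$ is in fact a frame homomorphism. This rests on the purely set-theoretic identities that preimage commutes with arbitrary unions and with arbitrary intersections of subsets: for any family $\{U_i\}_{i \in I}$ in $\mathcal{O}(Y)$, one has $f^{-1}\bigl(\bigcup_i U_i\bigr) = \bigcup_i f^{-1}(U_i)$ and $f^{-1}\bigl(\bigcap_i U_i\bigr) = \bigcap_i f^{-1}(U_i)$. Restricting the second identity to finite families and observing $f^{-1}(Y) = X$, $f^{-1}(\emptyset) = \emptyset$, we see that all the operations defining a frame are preserved, so $F(f)$ is indeed a morphism in $\mathbf{Frm}$.

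Next I would check the two functoriality axioms. Preservation of identities is immediate: for any $U \in \mathcal{O}(X)$ we have $(\mathrm{id}_X)^{-1}(U) = U$, so $F(\mathrm{id}_X) = \mathrm{id}_{\mathcal{O}(X)}$. Contravariance follows from the elementary identity $(g \circ f)^{-1} = f^{-1} \circ g^{-1}$ for composable maps $f : X \rightarrow Y$ and $g : Y \rightarrow Z$, which translates directly to $F(g \circ f) = F(f) \circ F(g)$, i.e.\ the condition that $F$ is contravariant and so lands in $\mathbf{Frm}^{op}$.

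The main conceptual obstacle, rather than any genuine computational difficulty, is explaining why the codomain must be $\mathbf{Frm}$ and not $\mathbf{Heyt}$. As already observed in the previous proposition, the preimage along a continuous $f$ need not preserve the Heyting implication unless $f$ is open; however, preservation of \emph{arbitrary} joins survives, because preimage distributes over unions of any cardinality. This is exactly what distinguishes a frame homomorphism from a Heyting algebra homomorphism, and it is the reason this functor is naturally valued in $\mathbf{Frm}^{op}$.
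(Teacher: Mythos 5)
Your proof is correct and follows essentially the same route as the paper's: both define $F$ by $X \mapsto \mathcal{O}(X)$ and $f \mapsto f^{-1}$, then verify preservation of identities and the reversal of composition via $(g \circ f)^{-1} = f^{-1} \circ g^{-1}$. You are in fact slightly more careful than the paper in explicitly checking that $f^{-1}$ is a frame homomorphism (preimage commuting with arbitrary unions and finite intersections), which the paper leaves implicit.
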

\begin{proof}
    Let $F: \mathbf{Top} \rightarrow \mathbf{Frm}^{op}$ be the map that assigns to each topological space $X$ its corresponding frame $\mathcal{O}(X)$ of open sets and to each continuous map $f: Y \rightarrow X$ the frame homomorphism $F(f) = f^{-1}: \mathcal{O}(X) \rightarrow \mathcal{O}(Y)$, where $f^{-1}$ is the preimage map induced by $f$.
    If $F$ is contravariant, then for every for each topological space $X$, the identity map $\text{id}_X: X \rightarrow X$ should map to the identity homomorphism $\text{id}_{\mathcal{O}(X)}$ on the frame $\mathcal{O}(X)$.
    Indeed, the preimage map induced by $\text{id}_X$ is $\text{id}_{\mathcal{O}(X)}$, as for any open set $U \in \mathcal{O}(X)$, we have $\text{id}_X^{-1}(U) = U$. Thus, $F(\text{id}_X) = \text{id}_{\mathcal{O}(X)}$.
    For continuous maps $f: Z \rightarrow Y$ and $g: Y \rightarrow X$, we need to show that $F(g \circ f) = F(f) \circ F(g)$, where $F(g \circ f): \mathcal{O}(X) \rightarrow \mathcal{O}(Z)$ is the preimage map induced by the composition $g \circ f$.
    For any open set $U \in \mathcal{O}(X)$, we have
    \[
        F(g \circ f)(U) = (g \circ f)^{-1}(U) = f^{-1}(g^{-1}(U)) = F(f)(F(g)(U)).
    \]
    Hence, $F(g \circ f) = F(f) \circ F(g)$.
    Since $F$ preserves identity morphisms and reverses the composition of morphisms, it is a contravariant functor.
\end{proof}
We will now prove a result in the other direction.
\begin{proposition}
    The map $G : \mathbf{Frm} \rightarrow \mathbf{Top}^{op}$ is a contravariant functor.
\end{proposition}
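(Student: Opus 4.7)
The plan is to mirror the construction used in Proposition 2.0.4 and Stone's representation theorem, but with the roles of the categories swapped. On objects, $G$ should send a frame $L$ to the topological space $\mathrm{pt}(L)$ whose points are the prime filters on $L$, equipped with the topology generated by the basis $\{\beta(a) \mid a \in L\}$ where $\beta(a) = \{P \in \mathrm{pt}(L) \mid a \in P\}$. The proof that this indeed forms a topology is exactly the argument of Proposition 1.0.5, which applies to any (complete) Heyting algebra, and hence to any frame.

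For the action on morphisms, given a frame homomorphism $\varphi : L \to M$, I would define $G(\varphi) : \mathrm{pt}(M) \to \mathrm{pt}(L)$ by $G(\varphi)(P) = \varphi^{-1}(P)$. The first step is to check this is well-defined, i.e. that $\varphi^{-1}(P)$ is a prime filter on $L$ whenever $P$ is one on $M$. Upward closure uses monotonicity of $\varphi$, closure under meets uses $\varphi(a \wedge b) = \varphi(a) \wedge \varphi(b)$, and primality uses $\varphi(a \vee b) = \varphi(a) \vee \varphi(b)$ together with the primality of $P$. Continuity of $G(\varphi)$ then reduces, as in Proposition 2.0.3, to the basic identity
\begin{align*}
    G(\varphi)^{-1}(\beta(a)) = \{P \in \mathrm{pt}(M) \mid a \in \varphi^{-1}(P)\} = \{P \mid \varphi(a) \in P\} = \beta(\varphi(a)),
\end{align*}
which is open in $\mathrm{pt}(M)$ by construction.

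The functoriality checks are then routine and parallel those in the previous proposition. For the identity frame morphism $\mathrm{id}_L$, we have $G(\mathrm{id}_L)(P) = \mathrm{id}_L^{-1}(P) = P$, so $G(\mathrm{id}_L) = \mathrm{id}_{\mathrm{pt}(L)}$. For composable frame homomorphisms $\varphi : L \to M$ and $\psi : M \to N$, one computes
\begin{align*}
    G(\psi \circ \varphi)(P) = (\psi \circ \varphi)^{-1}(P) = \varphi^{-1}(\psi^{-1}(P)) = G(\varphi)(G(\psi)(P)),
\end{align*}
so $G(\psi \circ \varphi) = G(\varphi) \circ G(\psi)$, exhibiting $G$ as contravariant.

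The main obstacle, and the only step requiring any real content, is verifying that $\varphi^{-1}(P)$ really is a prime filter; everything else is formal once the basis $\beta(a)$ has been introduced. In particular, one should emphasize that the frame axioms (preservation of arbitrary joins and finite meets) are exactly what is needed to pull back prime filters cleanly. The completeness of the frame and the stronger preservation properties of frame homomorphisms (as compared to arbitrary lattice maps) make this construction significantly smoother than the general Heyting algebra case and guarantees that $G$ lands in $\mathbf{Top}^{op}$ rather than merely some category of prefilter-like structures.
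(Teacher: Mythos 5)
Your proposal is correct and follows essentially the same route as the paper: $G$ sends a frame to its space of prime filters and a frame homomorphism $\varphi$ to the preimage map $P \mapsto \varphi^{-1}(P)$, with contravariance falling out of the identity $(\psi \circ \varphi)^{-1} = \varphi^{-1} \circ \psi^{-1}$. You are in fact somewhat more careful than the paper's own proof, which checks only the identity and composition laws and leaves the well-definedness (preimage of a prime filter is prime) and continuity of the induced map to be inherited from the earlier propositions on Heyting algebra homomorphisms and the basis $\beta(a)$.
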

\begin{proof}
    Recall that any frame $ H $ is also a complete Heyting algebra.
    Let $ F : \mathbf{Frm} \rightarrow \mathbf{Top}^{op} $ be the functor that assigns to each frame $ H $ a topological space $ X $ whose points correspond to the prime filters on $ H $, and to each frame homomorphism $ f : H \rightarrow K $, a continuous map $ F(f) = f^{-1} : Y \rightarrow X $, where $ Y $ is the set of prime filters on $ K $, and $ X $ is the set of prime filters on $ H $.
    To show that $ F $ is a contravariant functor, we verify the two functorial properties: preservation of identities and composition.
    First, consider the identity homomorphism $ \text{id}_{H} : H \rightarrow H $. For any prime filter $ P $ on $ H $, we have $ \text{id}^{-1}_{H}(P) = P $, so the induced map $ F(\text{id}_{H}) = \text{id}_{X} : X \rightarrow X $ is indeed the identity map on $ X $. Therefore, $ F(\text{id}_{H}) = \text{id}_{F(H)} $, verifying that $ F $ preserves identities.
    Next, let $ f : H \rightarrow K $ and $ g : K \rightarrow L $ be frame homomorphisms. We need to show that $ F(g \circ f) = F(f) \circ F(g) $. Consider any prime filter $ Q $ on $ L $. By the definition of $ F $, we have:
    \[
    F(g \circ f)(Q) = (g \circ f)^{-1}(Q)
    \]
    and 
    \[
    (F(f) \circ F(g))(Q) = F(f)(g^{-1}(Q)) = f^{-1}(g^{-1}(Q)) = (g \circ f)^{-1}(Q).
    \]
    Hence, $ F(g \circ f)(Q) = (F(f) \circ F(g))(Q) $ for all prime filters $ Q $ on $ L $, proving that $ F(g \circ f) = F(f) \circ F(g) $.
    Therefore, $ F $ is a contravariant functor from $ \mathbf{Frm} $ to $ \mathbf{Top}^{op} $.
\end{proof}
By these two results, it appears that $F$ and $G$ are ``complementary'' in a sense: they send objects and morphisms between reversed 
pairs of source and target categories. The notion of an \textit{adjunction} between functors formalizes this almost-ineffable complementarity.
\begin{definition}
    Let $F : \mathcal{C} \rightarrow \mathcal{D}$ and $G : \mathcal{D} \rightarrow \mathcal{C}$ be functors. We say that $F$ is 
    \textit{left adjoint} to $G$ when there is a natural isomorphism 
    \begin{align*}
        \Hom_\mathcal{D}(F(C), D) \simeq \Hom_\mathcal{C}(C, G(D))
    \end{align*} 
    for every object $C$ in $\mathcal{C}$ and $D$ in $\mathcal{D}$. If this is true, we may also say $G$ is \textit{right adjoint} to 
    $F$.
\end{definition}
\begin{proposition}
    The functors $F : \mathbf{Top} \rightarrow \mathbf{Frm}^{op}$ and $G : \mathbf{Frm} \rightarrow \mathbf{Top}^{op}$ constructed 
    previously are adjoint.
\end{proposition}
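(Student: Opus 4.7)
The plan is to unpack the adjunction definition and construct the required natural bijection explicitly. Since $G : \mathbf{Frm} \to \mathbf{Top}^{op}$ can equivalently be regarded as a functor $G : \mathbf{Frm}^{op} \to \mathbf{Top}$, producing the adjunction $F \dashv G$ amounts to exhibiting a natural isomorphism
\begin{align*}
\Hom_{\mathbf{Frm}}(H, \mathcal{O}(X)) \simeq \Hom_{\mathbf{Top}}(X, G(H))
\end{align*}
for each space $X$ and frame $H$, where we are using the identification $\Hom_{\mathbf{Frm}^{op}}(\mathcal{O}(X), H) = \Hom_{\mathbf{Frm}}(H, \mathcal{O}(X))$.

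For the forward map, given a frame homomorphism $\varphi : H \to \mathcal{O}(X)$, I would define $\hat\varphi : X \to G(H)$ by $\hat\varphi(x) := \{h \in H : x \in \varphi(h)\}$. The key checks are that $\hat\varphi(x)$ is a prime filter on $H$: upward closure follows from $\varphi$ being order-preserving, closure under binary meets from $\varphi$ preserving meets, and primality from $\varphi$ preserving joins, since $x \in \varphi(a \vee b) = \varphi(a) \cup \varphi(b)$ forces $x$ into one summand. Continuity of $\hat\varphi$ is then immediate from the identification $\hat\varphi^{-1}(\beta(h)) = \varphi(h)$, with $\{\beta(h)\}_{h \in H}$ the basis of $G(H)$ from Proposition 1.0.5.

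In the reverse direction, given a continuous map $f : X \to G(H)$, I would set $\tilde f : H \to \mathcal{O}(X)$ to be $\tilde f(h) := f^{-1}(\beta(h))$. Openness comes from continuity of $f$, while $\tilde f$ preserves meets and joins thanks to the identities $\beta(a \wedge b) = \beta(a) \cap \beta(b)$ and $\beta(a \vee b) = \beta(a) \cup \beta(b)$ (the latter using primality) together with the fact that preimages commute with intersections and unions. Verifying the two constructions are mutually inverse is then a short unwinding: $\widetilde{\hat\varphi}(h) = \hat\varphi^{-1}(\beta(h)) = \varphi(h)$, and the other composition is symmetric.

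The main obstacle I anticipate lies in verifying naturality in both variables. Given a continuous $g : X' \to X$ and a frame homomorphism $\psi : K \to H$, one needs to confirm that the bijection commutes with pre- and post-composition by $F(g) = g^{-1}$ and $G(\psi) = \psi^{-1}$; after unfolding all definitions this should reduce to the associativity of preimage formation and the elementary identity $h \in \psi^{-1}(P) \iff \psi(h) \in P$. A secondary subtlety worth flagging is that in the standard presentation of frame-theoretic duality one uses \emph{completely} prime filters rather than merely prime filters, so strictly the adjunction above recovers the classical one only on the spatial subcategory; this is the usual reason the adjunction fails to be an equivalence without sobriety assumptions on the topological side.
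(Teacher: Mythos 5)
Your proposal is correct and follows essentially the same route as the paper: both establish the adjunction by exhibiting an explicit bijection between $\Hom_{\mathbf{Frm}}(H,\mathcal{O}(X))$ and $\Hom_{\mathbf{Top}}(X, G(H))$, sending $\varphi$ to $x \mapsto \{h \in H : x \in \varphi(h)\}$ and $f$ to $h \mapsto f^{-1}(\beta(h))$, and then checking mutual inverseness and naturality. Your write-up is in fact more careful than the paper's (which defines the point-map only vaguely as ``the prime filter corresponding to $\phi^{-1}(x)$'' and leaves the inverse check to a preimage identity), and your closing remark about completely prime versus merely prime filters flags a genuine subtlety that the paper glosses over.
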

\begin{proof}
    We will show that $F$ is left adjoint to $G$ by constructing a natural isomorphism 
    \[
    \varphi_{X,H}: \text{Hom}_{\mathbf{Frm}^{op}}(F(X), H) \cong \text{Hom}_{\mathbf{Top}}(X, G(H)),
    \]
    natural in both $X \in \mathbf{Top}$ and $H \in \mathbf{Frm}$.
    Given a continuous map $f: X \rightarrow G(H)$, we can induce a frame homomorphism $\mathcal{O}(X) \rightarrow H$ by pulling back open sets along $f$, i.e. $\phi_f(U) = f^{-1}(U)$ for any $U \in \mathcal{O}(G(H))$. Since $H \cong \mathcal{O}(G(H))$, this gives an element of $\text{Hom}_{\mathbf{Frm}^{op}}(\mathcal{O}(X), H)$.

    Conversely, for a frame homomorphism $\phi: \mathcal{O}(X) \rightarrow H$, we can define a continuous map $g: X \rightarrow G(H)$ by mapping each point $x \in X$ to the prime filter in $H$ corresponding to $\phi^{-1}(x)$. This gives an element of $\text{Hom}_{\mathbf{Top}}(X, G(H))$.
    We must now show that these assignments are mutually inverse and natural.
    To check naturality, consider the following diagram
    \[
    \begin{tikzcd}
    X \ar[d, "g"'] \ar[r, "f"] & G(H) \ar[d, "G(h)"] \\
    Y \ar[r, "g^*"'] & G(K)
    \end{tikzcd}
    \]
    Here, $g^*: \mathcal{O}(Y) \rightarrow \mathcal{O}(X)$ is the frame homomorphism induced by the continuous map $g: Y \rightarrow X$. The commutativity of this diagram corresponds to the naturality condition for the isomorphism $\varphi_{X,H}$.
    To check the mutual inverse condition, we need to show that the following equation holds for all prime filters $Q$ on $K$
    \[
    (g \circ f)^{-1}(Q) = g^{-1}(f^{-1}(Q)),
    \]
    which follows from the definition of preimage and confirms that $\varphi_{X,H}$ respects the composition of morphisms.
    Therefore, we have established a natural isomorphism between the hom-sets, showing that $F$ is left adjoint to $G$.
\end{proof}
Note that this adjunction does not define an (anti)-\textit{equivalence} between these categories: it can be shown that not every frame 
generates a topological space with \textit{points}, and thus the adjunction above falls short of being a full duality of the categories 
involved (the appropriate duality is between the category of frames, and point-free generalizations of topological spaces, known as 
\textit{locales}). Nevertheless, our motivations being primarily logical, this adjunction suffices to summarize many of the relationships 
between topology and algebra we have been discussing throughout this section.

One interesting phenomenon here is the way the functors modify the morphisms between the objects of their categories. While a homomorphism 
of Heyting algebras always preserves implication, simply shifting towards viewing them as \textit{locales} changes the character of 
these maps, making it possible to treat this category as a target of functors from the category of topological spaces (continuous 
maps always induce locale homomorphisms, as we know). In a way, the categories of topological spaces and locales contain less 
\textit{logical} information in their morphisms than the more ``vanilla'' category of Heyting algebras does. In a way, this shows us 
the limits of topologizing. However, we \textit{did} prove some results that mitigate this -- maps that are open and continuous, for 
example, \textit{do} induce Heyting algebra homomorphisms. We can thus define restrictions of the functor to various classes of 
morphisms as needed using such results.

We have now developed all the machinery needed to start discussing \textit{dual} intuitionistic logic, which will henceforth be the 
primary focus of the paper. 

\section{Dual intuitionistic logic}
We will begin our discussion of dual intuitionistic logic nearly opposite to how we began the last section: namely, we begin with an 
exploration of duality or \textit{contravariance} in general. Thus, we will go from the abstract to the concrete. This is primarily 
because the abstractions we have developed in the previous section will make it possible to prove many of the results we went through 
concretely for Heyting algebras in a more concise and general way, saving us a lot of work, and sparing time to build the reader's 
intuition on dual intuitionistic logic -- which is genuinely much stranger than intuitionistic logic. Thematically similar results can be 
found in James \cite{james_1996_closed} and Shramko \cite{yaroslavshramko_2005_dual}, though neither source follows the progression we do 
(we instead mirror Johnstone's style once again here, though he does not write about this topic). 

Let $H$ be a Heyting algebra. Its order is given by $\leq$, and as was proven in section 1, for $a, b \in H$, $a \leq b$ if and only 
if $a \rightarrow b$ -- thus, the order on the Heyting algebra fully characterizes the behavior of its implication operation. Meets 
(and dually, joins) can thus be characterized in terms of implication as follows 
\begin{definition}
    For $a, b \in H$, $a \wedge b$ is defined as the largest $x \in H$ such that $a \rightarrow x$ and $b \rightarrow x$; in other 
    words, smallest $x$ such that $a \leq x$ and $b \leq x$. 
\end{definition}
This is exactly congruent with the definition of the meet being the \textit{greatest lower bound}, and it is easy to see that the 
symmetric formulation for joins also yields the \textit{lowest upper bound}. 

Let us construct a new structure $H^{op}$, with the same underlying set as $H$, but with the direction of the order given reversed:
that is, we induce the \textit{lattice} structure with the following holding for elements $a, b \in H$
\begin{align*}
    a \leq b \in H \iff a \geq b \in H^{op}.
\end{align*}
A few things are already clear about $H^{op}$. With its order, the top and bottom elements of $H$ are swapped, so we know that it is a 
bounded lattice. It is also distributive (see Grätzer for a proof). Because of the reversal of order, we may also show that joins 
and meets are \textit{reversed}, and that the notion of ``implication'' itself becomes dualized, opening the window into characterizing 
the difference 
\begin{proposition}
    Let $H$ be a Heyting algebra, and let $H^{op}$ denote the bounded distributive lattice with the opposite order. 
    Then for $a, b \in H$, $a \vee_H b = a \wedge_{H^{op}} b$, $a \wedge b = a \vee_{H^{op}} b$, and $a \rightarrow b = 
    a \leftarrow b$, where $\leftarrow$ is defined such that $a \leftarrow b \leq z \iff a \leq b \vee c$.  
\end{proposition}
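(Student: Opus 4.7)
The plan is to derive each of the three identities from the order-theoretic characterizations of joins, meets, and implication established in Section 1, and then translate those characterizations across the order reversal. The overall strategy is not computational but a careful bookkeeping exercise on inequalities.

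First I would observe that any upper bound of $\{a,b\}$ with respect to $\leq_H$ is a lower bound of the same pair with respect to $\leq_{H^{op}}$, since these two assertions are the same statement read in opposite directions of the order. Because ``least'' with respect to $\leq_H$ becomes ``greatest'' with respect to $\leq_{H^{op}}$, the least upper bound $a \vee_H b$ is the greatest lower bound of $\{a,b\}$ in $H^{op}$, which is exactly $a \wedge_{H^{op}} b$. The identity $a \wedge_H b = a \vee_{H^{op}} b$ follows by the same reasoning with the roles of supremum and infimum interchanged. At this point the two join--meet identities are complete, and only the implication identity remains.

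For implication, I would use the adjunction characterization implicit in Section 1: $a \rightarrow b$ is the unique element of $H$ satisfying $c \leq_H (a \rightarrow b) \iff (a \wedge_H c) \leq_H b$ for every $c \in H$. Translating each inequality into $\leq_{H^{op}}$ and invoking the meet--join swap established above, the same biconditional reads $(a \rightarrow b) \leq_{H^{op}} c \iff b \leq_{H^{op}} a \vee_{H^{op}} c$, which is precisely the universal property defining the co-implication $\leftarrow$ from the statement of the proposition (with the ``$c$'' on the right of the stated biconditional understood as the same bound variable as the ``$z$'' on the left, and keeping in mind that the stated adjunction already lives in $H^{op}$). Uniqueness of adjoints, if they exist, then pins $a \rightarrow b$ and $a \leftarrow b$ to the same element of the underlying set.

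The main obstacle, such as it is, is purely bookkeeping: keeping track of which inequalities live in $H$ and which in $H^{op}$, and ensuring that every reversal of $\leq$ is paired with the corresponding $\vee$--$\wedge$ swap from the first part. Once that discipline is maintained, each step is essentially a rewriting. Notably there is no separate existence argument to make, since $\leftarrow$ in $H^{op}$ is realized by the very element of the underlying set that already realizes Heyting implication in $H$, and the first two identities ensure the appropriate operations exist in $H^{op}$ as soon as they exist in $H$.
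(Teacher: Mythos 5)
Your proposal is correct in substance and follows essentially the same route as the paper: the join--meet identities via reading upper/lower bounds across the order reversal, and the implication identity via translating the adjunction $c \leq_H (a \rightarrow b) \iff a \wedge_H c \leq_H b$ into $H^{op}$. The one wrinkle --- shared with the paper's own statement and proof, so it does not distinguish your argument from theirs --- is that the biconditional you derive, $(a \rightarrow b) \leq_{H^{op}} c \iff b \leq_{H^{op}} a \vee_{H^{op}} c$, is literally the defining property of $b \leftarrow a$ rather than $a \leftarrow b$ under the convention $a \leftarrow b \leq z \iff a \leq b \vee z$, so the identification strictly reads $a \rightarrow_H b = b \leftarrow_{H^{op}} a$; your closing appeal to uniqueness of adjoints is the right way to pin down the element once that argument order is fixed.
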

\begin{proof}
Recall that $ H^{op} $ is defined as the lattice with the 
opposite order,
meaning for any 
$ a, b \in H $, 
we have that 
$a \wedge_{H^{op}} b = \sup\{a, b\} \quad \text{and} \quad a \vee_{H^{op}} b = \inf\{a, b\}$.
Since $ \sup $ in $ H^{op} $ corresponds to $ \inf $ in $ H $ and vice versa, we have
$a \vee_H b = a \wedge_{H^{op}} b \quad \text{and} \quad a \wedge_H b = a \vee_{H^{op}} b$.
Next, consider the implication operation $ \rightarrow $ in $ H $, which satisfies
$a \rightarrow b \text{ is the largest element } z \text{ such that } a \wedge_H z \leq b.$
In $ H^{op} $, the corresponding operation $ \leftarrow $ should satisfy
$a \leftarrow b \text{ is the largest element } z \text{ such that } a \wedge_{H^{op}} z \leq b \text{ in the opposite order.}$
Since $ \wedge_{H^{op}} $ corresponds to $ \vee_H $ in $ H $ and the order is reversed, this condition becomes
\[
a \leftarrow b \geq z \iff a \geq b \vee_H z \text{ in } H,
\]
which implies
$a \leftarrow b = a \rightarrow b.$
Therefore, the duality relationships hold as stated, completing the proof.
\end{proof}
Let us call structures obtained by dualizing Heyting algebras as above \textit{co-Heyting algebras}. We can also characterize a 
similar duality between homomorphisms.
\begin{proposition}
    $\varphi : L \rightarrow K$ is a homomorphism of Heyting algebras if and only if $\psi : L^{op} \rightarrow K^{op}$ is a homomorphism
    of co-Heyting algebras.
\end{proposition}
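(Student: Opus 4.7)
The plan is to observe that this proposition is essentially a book-keeping consequence of Proposition 3.0.1. Since $L$ and $L^{op}$ share the same underlying set (only the order differs), the natural candidate for $\psi$ is to take the same underlying set-theoretic function as $\varphi$. Once this identification is made, preserving the co-Heyting operations on $L^{op}$ and $K^{op}$ reduces, via the previous proposition, to preserving the Heyting operations on $L$ and $K$.

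First I would unpack what it means for $\psi$ to be a homomorphism of co-Heyting algebras. The relevant structural operations on a co-Heyting algebra are the join, the meet, and the co-implication $\leftarrow$ (the dual of $\rightarrow$). So $\psi$ must preserve $\vee_{L^{op}}$, $\wedge_{L^{op}}$, and $\leftarrow_{L^{op}}$. By Proposition 3.0.1, however, these are exactly $\wedge_L$, $\vee_L$, and $\rightarrow_L$ respectively (and analogously in $K$). Thus the equation $\psi(a \vee_{L^{op}} b) = \psi(a) \vee_{K^{op}} \psi(b)$ is literally the same equation as $\varphi(a \wedge_L b) = \varphi(a) \wedge_K \varphi(b)$, and similarly for the other two operations.

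The forward direction then proceeds by taking each of the three homomorphism identities for $\varphi$ and rewriting it using Proposition 3.0.1 to land in the co-Heyting vocabulary of $L^{op}$ and $K^{op}$. The converse direction requires no additional work: since the construction $(-)^{op}$ is an involution, i.e.\ $(L^{op})^{op} = L$ as bounded distributive lattices with operations, we can simply apply the forward direction to $\psi : L^{op} \to K^{op}$ to recover that $\varphi$ preserves the Heyting operations on $L$ and $K$.

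The hard part will really just be notational hygiene. There is no deep obstacle; the content of the result is entirely captured by Proposition 3.0.1, which established the correspondence of operations. The only subtlety worth flagging explicitly is that the definition of a co-Heyting algebra homomorphism must be fixed in advance as preserving joins, meets, and co-implication, so that the equivalence has the expected symmetry. Once this is done, the proof is a line-by-line translation via the identities $a \vee_H b = a \wedge_{H^{op}} b$, $a \wedge_H b = a \vee_{H^{op}} b$, and $a \rightarrow_H b = a \leftarrow_{H^{op}} b$, together with the involutivity of $(-)^{op}$.
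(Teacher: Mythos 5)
Your proposal is correct and follows essentially the same route as the paper's proof: take $\psi$ to be the same underlying function as $\varphi$ and translate each homomorphism identity through the operation correspondences $\vee_H = \wedge_{H^{op}}$, $\wedge_H = \vee_{H^{op}}$, $\rightarrow_H\ =\ \leftarrow_{H^{op}}$ established in the preceding proposition. Your handling of the converse via the involutivity of $(-)^{op}$ is a slightly more explicit packaging of what the paper dismisses as ``symmetric,'' but it is the same argument.
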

\begin{proof}
    Let $L$ and $K$ be Heyting algebras, and let $L^{op}$ and $K^{op}$ be their corresponding co-Heyting algebras.
    Let $\varphi : L \rightarrow K$ be a homomorphism of Heyting algebras. Then, for $a, b \in L$ $\varphi(a \vee_L b) = \varphi(a) \vee_K \varphi(b) =
    \varphi(a) \wedge_{K^{op}} \varphi(b)$, $\varphi(a \wedge_L b) = \varphi(a) \wedge_K \varphi(b) = \varphi(a) \vee_{K^{op}} \varphi(b)$,
    and $\varphi(a \rightarrow b) = \varphi(a) \rightarrow \varphi(b) = \varphi(a) \leftarrow \varphi(b)$.
    Define a map $\psi : L^{op} \rightarrow K^{op}$ with $\psi(a) = \varphi(a)$. Then, we see that we obtain the following, for 
    $a, b \in L^{op}$.
    \begin{align*}
        \psi(a \vee_{L^{op}} b) = \varphi(a) \vee_{K^{op}} \varphi(b) = \psi(a) \vee_{K^{op}} \psi(b) \\
        \psi(a \wedge_{L^{op}} b) = \varphi(a) \wedge_{K^{op}} \varphi(b) = \psi(a) \wedge_{K^{op}} \psi(b) \\
        \psi(a \leftarrow b) = \varphi(a) \leftarrow \varphi(b) = \psi(a) \leftarrow \psi(b)
    \end{align*}
    which shows that $\psi$ is a homomorphism of co-Heyting algebras. The argument to show that every co-Heyting algebra homomorphism
    induces a Heyting algebra homomorphism is symmetric, and thus we will end the proof here.
\end{proof}
This is actually enough to give us a formal duality between the categories of Heyting algebras and co-Heyting algebras.
\begin{proposition}
    Let $\mathbf{Heyt}$ denote the category of Heyting algebras, and $\mathbf{Heyt}^{op}$ denote the category of co-Heyting algebras,
    defined with the appropriate notions of homomorphism. Then $\mathbf{Heyt}$ is \textit{anti-isomorphic} to $\mathbf{Heyt}^{op}$.
\end{proposition}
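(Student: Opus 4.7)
The plan is to package the two preceding propositions into categorical form by exhibiting mutually inverse functors. I would define $F : \mathbf{Heyt} \to \mathbf{Heyt}^{op}$ by $F(H) = H^{op}$ on objects -- the co-Heyting algebra obtained by reversing the order, whose co-Heyting structure is supplied by Proposition 3.0.1 -- and by $F(\varphi) = \varphi$ on morphisms, reinterpreting the same underlying set-function as a co-Heyting morphism via Proposition 3.0.2. A candidate inverse $G : \mathbf{Heyt}^{op} \to \mathbf{Heyt}$ would be defined symmetrically, with well-definedness on morphisms secured by the converse direction of Proposition 3.0.2.

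Verifying the functor axioms is essentially vacuous, since both $F$ and $G$ act as the identity on underlying set-maps: we immediately have $F(\mathrm{id}_H) = \mathrm{id}_{H^{op}}$, and $F(\varphi \circ \chi) = F(\varphi) \circ F(\chi)$ as equalities of set functions. To see that $F$ and $G$ are mutually inverse, I would observe that order-reversal is an involution, so $G \circ F$ sends $H$ to $(H^{op})^{op} = H$ (with the original Heyting structure recovered by two applications of Proposition 3.0.1), and acts identically on the set-theoretic content of morphisms; the argument for $F \circ G$ is identical.

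The one point worth flagging is terminological: the functor $F$ is covariant qua functor, but it is called an \emph{anti-isomorphism} because its action on objects systematically reverses the order, exchanging joins with meets and Heyting implications with coimplications. I do not anticipate any real technical obstacle, since the statement is essentially a categorical repackaging of Propositions 3.0.1 and 3.0.2, and all the genuine work was already done there.
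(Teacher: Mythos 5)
Your proof is correct, but it takes a genuinely different route from the paper's. The paper defines a \emph{contravariant} functor that is the identity on objects ($F(X)=X$) and reverses the direction of each morphism, together with its symmetric inverse; this is really the general fact that any category is anti-isomorphic to its formal opposite, with the identification of $\mathbf{Heyt}^{op}$ with the co-Heyting category left implicit (the paper says as much immediately afterward). You instead build a \emph{covariant} functor that acts nontrivially on objects, $H \mapsto H^{op}$, and keeps each morphism as the same underlying set-function in the same direction, leaning on Proposition 3.0.2 (whose morphisms indeed go $L^{op} \to K^{op}$, not backwards) for well-definedness. The trade-off: the paper's version is formally an anti-isomorphism in the standard categorical sense (a contravariant isomorphism) but carries almost no content specific to Heyting algebras, whereas your version captures the substantive duality --- that order-reversal is an involution exchanging Heyting and co-Heyting structure --- at the cost of producing a functor that is covariant, so that ``anti-isomorphism'' must be read as referring to order-reversal inside the objects rather than arrow-reversal. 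You flag this terminological tension explicitly, which is the right thing to do; the statement as written is ambiguous between the two readings, and your argument is complete under yours.
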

\begin{proof}
    Define a contravariant functor $F : \mathbf{Heyt} \rightarrow \mathbf{Heyt}^{op}$ with $F(X) = X$ for Heyting algebras $X$ and 
    $F(f) = g : Y \rightarrow X$ for homomorphisms $f : X \rightarrow Y$ of Heyting algebras. Define another contravariant functor 
     $G : \mathbf{Heyt}^{op} \rightarrow \mathbf{Heyt}$ with $G(Y) = Y$ for co-Heyting algebras $Y$ and $G(g) = f : X \rightarrow Y$ for homomorphisms
     $g : Y \rightarrow X$ of co-Heyting algebras. Then, we see that for any Heyting algebra $X$, we have $G(F(X)) = G(X) = X$, thus 
     $G \circ F$ is the identity on objects of $\mathbf{Heyt}$ -- a symmetric argument can be used for objects in $\mathbf{Heyt}^{op}$.
     Similarly, for any homomorphism $f : X \rightarrow Y$ of Heyting algebras, we have $G(F(f)) = G(f : Y \rightarrow X) = f$, thus 
     $G \circ F$ is the identity on homomorphisms of $\mathbf{Heyt}$. A symmetric argument holds for objects and homomorphisms of 
     $\mathbf{Heyt}^{op}$, thus this completes the proof.
\end{proof}
A similar argument suffices to show an anti-isomorphic between \textit{any} category $\mathcal{C}$ and its opposite $\mathcal{C}^{op}$,
but we aimed to first demonstrate an incident of duality relevant to our paper that occurs in ``nature'', hence our detailed proof 
of this specific case. Philosophically speaking, we can see $\mathbf{Heyt}^{op}$ as consisting of the category whose objects are 
exactly the same as those of $\mathbf{Heyt}$, but will \textit{all} structural properties inverted. For example, let $f : A \rightarrow B$,
$g : B \rightarrow C$, and $h = g \circ f$ be morphisms in $\mathbf{Heyt}$. Then, we see that the following diagram 
\[\begin{tikzcd}
	A && B \\
	\\
	&& C
	\arrow["f", from=1-1, to=1-3]
	\arrow["h"', from=1-1, to=3-3]
	\arrow["g", from=1-3, to=3-3]
\end{tikzcd}\]
commutes on $C$. Under the image of the anti-isomorphism of categories (let's call the appropriate functor $O$ for now), the diagram 
becomes the following 
\[\begin{tikzcd}
	A && B \\
	\\
	&& C
	\arrow["{O(f)}"', from=1-3, to=1-1]
	\arrow["{O(h)}", from=3-3, to=1-1]
	\arrow["{O(g)}"', from=3-3, to=1-3]
\end{tikzcd}\]
which commutes on $A$ instead of $C$. This doesn't merely modify the \textit{internal} properties of the category (as we have seen 
from our construction), but also how it relates to other categories. Consider, for example, the functor $G : \mathbf{Frm} \rightarrow \mathbf{Top}^{op}$
we constructed in the previous section. It is not difficult to show that this functor is naturally isomorphic to some other 
$G' : \mathbf{Frm}^{op} \rightarrow \mathbf{Top}$ defined with similar (only dual) assignments -- thus, we are able to able to easily
\textit{dualize} a functor's domain to construct an equivalent functor, meaning that dual categories \textit{share} functors, only 
inverted (e.g $G$ is contravariant, whereas $G'$ could be seen as covariant).

Speaking more concretely, this means that if we take categories for which we have a duality, such as Heyting algebras and topological 
spaces, for example, we can obtain a duality between \textit{co}-Heyting algebras and topological spaces as well, and thus topologize 
the logic of co-Heyting algebras in the exact same way that we proceeded for Heyting algebras. Because dualization translates 
\textit{all} categorical notions over, as long as we pick morphisms that preserve desired logical properties (in this case, Heyting 
and co-Heyting algebra homomorphisms), we may topologize logical notions about co-Heyting algebras \textit{from} the corresponding 
notions about Heyting algebras without fear. Our abstractions have thus allowed us to engage in concrete constructions without worry 
about whether they \textit{actually} reflect logical properties we want to preserve. Let us use this knowledge to prove some results 
about co-Heyting algebras. Our proofs will proceed fairly concretely and topologically, to maximize logical insight.

\begin{proposition}
    Let $X$ be a topological space. Then its lattice of closed sets $\mathcal{C}(X)$ is a co-Heyting algebra.
\end{proposition}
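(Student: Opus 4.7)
The plan is to exploit the order-reversing bijection between $\mathcal{O}(X)$ and $\mathcal{C}(X)$ given by set-theoretic complementation, and then invoke the duality between Heyting and co-Heyting algebras established earlier in this section.

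First, I would define $c : \mathcal{O}(X) \to \mathcal{C}(X)$ by $c(U) = X \setminus U$. Since closed sets are by definition complements of open sets, $c$ is a bijection, and the elementary identity $U \subseteq V \iff V^c \subseteq U^c$ shows it is order-reversing. This yields an order-anti-isomorphism $\mathcal{C}(X) \cong \mathcal{O}(X)^{op}$. Under $c$, finite intersections of open sets correspond to finite unions of closed sets (so finite joins in $\mathcal{C}(X)$ exist and are given by unions), arbitrary unions of open sets correspond to arbitrary intersections of closed sets (so all meets exist and are given by intersections), and the bounds $\emptyset$ and $X$ swap roles. Distributivity is a self-dual property and transfers directly. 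Thus $\mathcal{C}(X)$ is a bounded distributive lattice whose lattice structure is opposite to that of $\mathcal{O}(X)$.

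Since $\mathcal{O}(X)$ is a Heyting algebra by the results of Section 1, the dualization results proved earlier in this section guarantee that $\mathcal{O}(X)^{op}$ carries a well-defined co-implication $\leftarrow$; transferring along $c$, so does $\mathcal{C}(X)$. Concretely, I expect the co-implication to admit the topologically natural description $A \leftarrow B = \mathrm{Cl}(A \setminus B)$, i.e.\ the smallest closed set $C$ with $A \subseteq B \cup C$, dualizing the formula $A \to B = \mathrm{int}(A^c \cup B)$ on the open-set side.

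The main obstacle will be confirming that this candidate genuinely matches the abstract co-implication produced by dualization. Because the adjoint condition $A \leftarrow B \leq C \iff A \leq B \vee C$ is stated purely in terms of $\leq$ and $\vee$, both of which flip consistently under $c$, the transfer is essentially formal; a direct check that $\mathrm{Cl}(A \setminus B) \subseteq C \iff A \subseteq B \cup C$ whenever $C$ is closed (using that closed sets equal their own closures) closes the loop. This establishes the existence of the co-implication on every pair of closed sets and completes the verification that $\mathcal{C}(X)$ is a co-Heyting algebra.
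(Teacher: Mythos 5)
Your proposal is correct and follows essentially the same route as the paper: both exploit the complementation duality with $\mathcal{O}(X)$ to transfer the bounded distributive lattice structure and then exhibit the co-implication as the closure $\mathrm{Cl}(A \setminus B)$. The only differences are cosmetic and in your favor --- you order $\mathcal{C}(X)$ by inclusion rather than the paper's reversed-inclusion convention, and you explicitly verify the adjunction $\mathrm{Cl}(A \setminus B) \subseteq C \iff A \subseteq B \cup C$ for closed $C$, which the paper merely asserts.
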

\begin{proof}
    Recall that for any topological space $X$, its lattice of open sets $\mathcal{O}(X)$ is a Heyting algebra. Recall also 
    that a set $A \subseteq X$ is \textit{closed} iff $X \backslash A$ is open -- we will denote that a set $A$ is closed with 
    $A \in \mathcal{C}(X)$. Then, $X, \emptyset \in \mathcal{C}(X)$, as they are open and each other's complements. From this it 
    follows that $\mathcal{C}(X)$ is closed under arbitrary intersections and finite unions, thus it is a lattice.

    We may induce an order on $\mathcal{C}(X)$ by stipulating that $A \leq_{\mathcal{C}(X)} B$ for $A, B \subseteq$ if and only 
    if $X \backslash A \leq_{\mathcal{O}(X)} X \backslash B$ (i.e. if and only if $X \backslash A \subseteq X \backslash B$). 
    From this, we obtain that $\mathcal{C}(X)$ is bounded, with $\emptyset$ as the top element, and $X$ as the bottom element. 
    Thus, for $A, B \in \mathcal{C}(X)$, we have $A \wedge B = A \cup B$, and $A \vee B = A \cap B$. 

    It remains to show that the co-Heyting implication (subtraction) $A \leftarrow B$ exists in $\mathcal{C}(X)$. To this end, 
    let us define
    \[
    A \leftarrow B := \mathrm{cl}({A \cap B^c}),
    \]
    where $\mathrm{cl}({A \cap B^c})$ denotes the closure of the intersection $A \cap B^c$ in $X$. By construction, 
    $A \leftarrow B$ is a closed set, and it is the smallest closed set containing the intersection of $A$ and the complement of $B$, which 
    makes it a subtractive operation, and precisely dual to Heyting implication (as that is the interior of a similar union, we simply 
    dualize both operations).
    Therefore, $\mathcal{C}(X)$ is indeed a co-Heyting algebra, completing the proof.
\end{proof}
A converse result, establishing that any co-Heyting algebra can be embedded into the lattice of closed sets of a topological space,
can also be proven, see James \cite{james_1996_closed} for a proof. Given the similarity to the corresponding proof for Heyting algebras in the first section (and the possibility 
of using a simple functorial proof), we will not spend time on this at the moment (although the next section contains a discussion on it,
with relevance to modal logic). Instead, armed with the comfort that algebro-topological dualities 
can be shown to hold with easy categorical arguments, we will dive into developing the basic concepts of dual intuitionistic logic 
centered around the co-implication operation defined in the proof above.

The utter strangeness of this dual intuitionistic logic should be apparent to the reader from the strategies used in the proof above.
For closed sets $A$ and $B$ (corresponding to elements of a co-Heyting algebra, thus to propositions), the co-implication $A \leftarrow B$
is defined as the closure of the intersection of $A$ and $B^c$: in other words, the closure of the set of all points in $A$ and 
\textit{not} in $B$. We may thus conceptualize this as measuring the degree to which one can affirm $A$ while affirming all that is 
not $B$, hence the use of ``subtraction'' to describe this operation.

The subtractive character of co-Heyting implication leads to an interesting concept of negation, adjoint to Heyting 
pseudocomplementation.
\begin{definition}
    Let $H$ be a co-Heyting algebra. Then, for $a \in H$, the \textit{co-Heyting negation} of $a$ is defined as 
    the element ${\sim} a = 1 \leftarrow a$. 
\end{definition}
What is the sense in this definition? Well, supposing that $X$ is the topological space that $H$ embeds into, we have that 
$a \mapsto A \in \mathcal{C}(X)$, and $1 \mapsto X$. Further topologizing (and somewhat abusing notation), we thus see that 
$1 \leftarrow a = \mathrm{cl}(X \cap A^c)$, or the closure of all points in $X$ that are not in $A$. This is why Lawvere called 
this operation \textit{non}-$a$, for a proposition $a$: the construction is much more ``liberal'' than that of the Heyting 
pseudocomplement (see \cite{lawvere_1991_intrinsic} for more discussion of this). It is easily seen that because $\leftarrow$ is adjoint to $\rightarrow$ in Heyting algebras, we can \textit{dualize}
the relationship pseudocomplementation shares to negation to obtain that ${\sim} a \leq x \textrm{ iff } 1 = a \vee x$ for any $a$ in a 
co-Heyting algebra $H$. In other words, ${\sim} a$ is the least $b \in H$ with $a \vee b = 1$, or topologically speaking, the smallest
closed set whose union with the set corresponding to $a$ covers the whole space $H$ embeds into. This definition has a number of 
interesting consequences, which we will characterize now (one trivial one is that ${\sim}{\sim}a \leq a$).

First, observe that unlike in intuitionistic logic, we can retain a version of the conjunctive De Morgan law in dual intuitionistic 
logic, although the disjunctive version does not always hold. We will call this the \textit{dual de Morgan law}. 
\begin{proposition}
    Let $H$ be a co-Heyting algebra. Then for $a, b \in H$, we have ${\sim}(a \wedge b) = {\sim}a \vee {\sim}b$.
\end{proposition}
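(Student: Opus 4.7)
The plan is to prove the equality by two inequalities, using the universal characterization of $\sim$ that is built into the co-Heyting subtraction adjunction. Specializing the adjunction $a \leftarrow b \leq z \iff a \leq b \vee z$ at $a = 1$ gives the key characterization that $\sim a \leq z$ if and only if $a \vee z = 1$; in particular, $\sim a$ is itself the least element $z$ with $a \vee z = 1$, and so $a \vee \sim a = 1$ always holds. Everything in the argument will be extracted from this one property plus distributivity (available since the lattice is distributive).

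For the direction $\sim(a \wedge b) \leq \sim a \vee \sim b$, I would apply the characterization to the right-hand side: it suffices to verify that $(a \wedge b) \vee (\sim a \vee \sim b) = 1$. By distributivity, this expression equals
\begin{align*}
(a \vee \sim a \vee \sim b) \wedge (b \vee \sim a \vee \sim b),
\end{align*}
and each factor is $1$ because $a \vee \sim a = 1$ and $b \vee \sim b = 1$. So $\sim a \vee \sim b$ qualifies as an element whose join with $a \wedge b$ is the top, forcing $\sim(a \wedge b) \leq \sim a \vee \sim b$.

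For the reverse inequality, I would first record that $\sim$ is order-reversing: if $p \leq q$, then $q \vee \sim p \geq p \vee \sim p = 1$, so $\sim p$ is among the $z$'s witnessing $q \vee z = 1$, and hence $\sim q \leq \sim p$ by the minimality clause. Applying this to $a \wedge b \leq a$ and $a \wedge b \leq b$ yields $\sim a, \sim b \leq \sim(a \wedge b)$, and therefore $\sim a \vee \sim b \leq \sim(a \wedge b)$. Combining the two inequalities closes the proof.

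I do not expect any serious obstacle here; the argument is essentially the order-theoretic dual of the familiar Heyting-algebra identity $\neg(a \vee b) = \neg a \wedge \neg b$, and the only subtlety is keeping the characterization of $\sim$ clean and invoking distributivity in the right spot. If anything is worth emphasizing in the writeup, it is the order-reversing nature of $\sim$, since the analogous failure of the disjunctive dual De Morgan law (mentioned in the surrounding text) reflects the fact that $\sim\sim a \leq a$ only, so the symmetric argument cannot simply be run backwards.
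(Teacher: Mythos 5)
Your proof is correct, but it takes a genuinely different route from the paper. The paper argues topologically: it passes to the lattice of closed sets via the embedding $a \mapsto A$, identifies ${\sim}a$ with $\mathrm{cl}(A^c)$, and then chases points to show $\mathrm{cl}((A\cap B)^c) = \mathrm{cl}(A^c)\cup\mathrm{cl}(B^c)$, finally transporting the equality back along the duality. You instead work entirely inside the abstract co-Heyting algebra, using only the adjunction-derived characterization ${\sim}a \leq z \iff a \vee z = 1$ (which the paper does record just before this proposition) together with binary distributivity: one inequality by exhibiting ${\sim}a \vee {\sim}b$ as a witness whose join with $a \wedge b$ is $1$, the other by antitonicity of ${\sim}$. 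Both of your inequalities check out, and your closing remark correctly locates why the disjunctive law fails (only ${\sim}{\sim}a \leq a$ holds). Your approach buys independence from the representation: it never needs the embedding into $\mathcal{C}(X)$, the identification of ${\sim}$ with a closure operation, or any point-set reasoning, and so it sidesteps the bookkeeping about whether meet in $\mathcal{C}(X)$ is union or intersection that the paper's pointwise argument has to manage. What the paper's route buys is the spatial intuition it is explicitly cultivating --- the reader literally sees the boundary points that make ${\sim}$ non-Boolean --- at the cost of leaning on the representation theorem. Either proof is acceptable; yours is the cleaner and more self-contained of the two.
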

\begin{proof}
    We handle this proof topologically, using the duality we have constructed. Let $a \mapsto A$ and $b \mapsto B$. Then 
    ${\sim}(a \wedge b)$ corresponds to the closed set $\mathrm{cl}((A \cap B)^c)$. Suppose $x \in \mathrm{cl}((A \cap B)^c)$. 
    Then either $x \in \mathrm{cl}(A^c$), or $x \in \mathrm{cl}(B^c)$, or both, thus $x \in \mathrm{cl}(A^c) \cup \mathrm{cl}(B^c)$.
    On the other hand, suppose $x \in \mathrm{cl}(A^c) \cup \mathrm{cl}(B^c)$. Then $x \not \in \mathrm{int}(A)$ or $x \not \in 
    \mathrm{int}(B)$. Thus $x \not \in \mathrm{int}(A \cap B)$, and from this we may conclude that it is in the complement, that is
    $x \in \mathrm{cl}((A \cap B)^c)$. Since we have shown that $x \in \mathrm{cl}((A \cap B)^c)$ if and only if $x \in \mathrm{cl}(A^c) 
    \cup \mathrm{cl}(B^c)$, by duality, we have completed the proof of the corresponding lattice-theoretic equality.
\end{proof}

This understanding of negation also allows us to construct an interesting operator, known as the co-Heyting boundary operator. We introduce 
the formalism now.
\begin{definition}
    Let $H$ be a co-Heyting algebra, and let $a \in H$. Then, define the \textit{co-Heyting boundary operator} for $a$ with 
    $\partial a := a \wedge {\sim} a$.
\end{definition}
In intuitionistic logic, such an operator would always yield empty output. However, this is not the case in dual intuitionistic logic!
\begin{proposition}
    Let $H$ be a co-Heyting algebra, and let $a \in H$. Then $\partial a$ is not necessarily empty.
\end{proposition}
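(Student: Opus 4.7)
The plan is to produce an explicit counterexample using the topological realization of co-Heyting algebras developed in this section. By the embedding into $\mathcal{C}(X)$ alluded to just after the proof that $\mathcal{C}(X)$ is a co-Heyting algebra, any co-Heyting algebra sits inside the closed-set lattice of some topological space with the operations faithfully preserved. So I will translate each ingredient of $\partial a$ into its topological counterpart and then pick $X$ and $A$ so that the resulting closed set is visibly nonempty.

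Unpacking ${\sim}a = 1 \leftarrow a$ with the formula $A \leftarrow B = \mathrm{cl}(A \cap B^c)$ gives that ${\sim}a$ corresponds to $\mathrm{cl}(A^c)$, exactly as the author notes after introducing co-Heyting negation. Therefore $\partial a = a \wedge {\sim}a$ corresponds to $A \cap \mathrm{cl}(A^c)$, and since $A$ is already closed this is precisely the classical topological boundary of $A$. The proposition thus reduces to exhibiting a topological space and a closed set whose topological boundary is nonempty, which exists in any space that admits a closed set that is not clopen.

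For a concrete witness I would take the Sierpi\'nski space $X = \{a, b\}$ with open sets $\{\emptyset, \{a\}, X\}$, whose closed sets are $\emptyset$, $\{b\}$, and $X$. Setting $A = \{b\}$, the complement $\{a\}$ is open but not closed, and its closure (the smallest closed superset) is $X$, so $\partial A = \{b\} \cap X = \{b\} \neq \emptyset$. Pulled back across the embedding, this yields an element $a$ of the co-Heyting algebra associated to $X$ with $\partial a \neq 0$. The only real subtlety is making sure the translation ${\sim}a \leftrightarrow \mathrm{cl}(A^c)$ is carried out with the correct identification of the top element of the co-Heyting algebra with $X$ under the embedding; once that is pinned down, the counterexample drops out immediately, and the contrast with the intuitionistic case (where $a \wedge \neg a = 0$ always) is manifest.
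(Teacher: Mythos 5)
Your proposal is correct and follows essentially the same route as the paper: translate $\partial a$ through the embedding into $\mathcal{C}(X)$ to obtain $A \cap \mathrm{cl}(A^c)$ and observe that this can be nonempty. You actually go one step further than the paper, which merely asserts that ``there are many examples of topological spaces where this is the case,'' by exhibiting the Sierpi\'nski space as an explicit witness --- a worthwhile addition, though you should rename either the point $a \in X$ or the algebra element $a \in H$ to avoid the clash of notation.
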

\begin{proof}
    Let $X$ be a topological space, and let $\mathcal{C}(X)$ be its lattice of closed sets. Suppose that $\mathcal{C}(X)$ is the closed
    set lattice that $H$ embeds into, and thus suppose $a \mapsto A$, where $A \in \mathcal{C}(X)$. Then, we see that we may write 
    \begin{align*}
        \partial a &= a \wedge {\sim} a \\
                   &= A \cap \mathrm{cl}(X \cap A^c) \\
                   &= A \cap \mathrm{cl}(A^c).
    \end{align*}
    While $A \cap A^c$ is necessarily empty, the closure $\mathrm{cl}(A^c)$ may contain points in $A$ (there are many examples of topological 
    spaces where this is the case). Thus, it follows that $\partial a$ is not necessarily empty, which completes the proof.
\end{proof}
So, dual intuitionistic logic is \textit{paraconsistent}, and thus the law of noncontradiction need not hold in a co-Heyting algebra 
$H$. However, as we will see, the law of excluded middle \textit{will} hold.
\begin{proposition}
    Let $H$ be a co-Heyting algebra, and let $a \in H$. Then, the union of closed sets corresponding to $a \vee {\sim} a$ always 
    covers the whole space associated with $H$. 
\end{proposition}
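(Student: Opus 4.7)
The plan is to argue topologically, using the embedding $H \hookrightarrow \mathcal{C}(X)$ into the closed-set lattice of some topological space $X$ that the previous sections let us assume. Under this embedding, I write $a \mapsto A$, and by the definition of co-Heyting negation together with the explicit formula for co-implication $A \leftarrow B = \mathrm{cl}(A \cap B^c)$ established in the earlier proposition, I compute
\begin{align*}
    {\sim} a \;=\; 1 \leftarrow a \;\mapsto\; \mathrm{cl}(X \cap A^c) \;=\; \mathrm{cl}(A^c).
\end{align*}
The join in $\mathcal{C}(X)$ is just ordinary union (since the order was reversed relative to the Heyting case), so the closed set corresponding to $a \vee {\sim} a$ is $A \cup \mathrm{cl}(A^c)$.

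From here I just need to show $A \cup \mathrm{cl}(A^c) = X$. One inclusion is trivial since everything is a subset of the ambient space. For the other, I use the elementary fact that any set is contained in its closure, giving $A^c \subseteq \mathrm{cl}(A^c)$, and hence
\begin{align*}
    A \cup \mathrm{cl}(A^c) \;\supseteq\; A \cup A^c \;=\; X.
\end{align*}
This yields equality, so the closed set associated to $a \vee {\sim} a$ is all of $X$, which is exactly the claim.

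As a sanity check, one could instead derive this purely algebraically from the adjoint characterization noted just before the proposition: ${\sim} a \leq x$ iff $a \vee x = 1$. Taking $x = {\sim} a$, the trivial inequality ${\sim} a \leq {\sim} a$ forces $a \vee {\sim} a = 1$. So there is really no hard step here; the only thing worth being careful about is that, under the order-reversal, the topological ``cover the whole space'' condition genuinely corresponds to the lattice-theoretic top element $1$ and not to $\emptyset$, and that ${\sim} a$ simplifies to $\mathrm{cl}(A^c)$ rather than anything more exotic. The substance of the proposition is entirely carried by the slack $A^c \subsetneq \mathrm{cl}(A^c)$, which is precisely the same slack that makes $\partial a$ nontrivial in the previous proposition — in dual intuitionistic logic, excluded middle and the failure of noncontradiction arise from the same topological phenomenon, namely that $A^c$ is generally not already closed.
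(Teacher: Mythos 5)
Your proof is correct and follows essentially the same route as the paper: embed into the closed-set lattice, identify ${\sim}a$ with $\mathrm{cl}(A^c)$, and conclude $A \cup \mathrm{cl}(A^c) \supseteq A \cup A^c = X$, which is exactly the paper's pointwise case split phrased as an inclusion. Your added algebraic derivation from the adjunction ${\sim}a \leq x \iff a \vee x = 1$ is a nice one-line corroboration the paper does not include, but the substance of the argument is the same.
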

\begin{proof}
    Let $X$ and $\mathcal{C}(X)$ be the space associated with $H$ and its closed set lattice respectively. Suppose $a$ maps to a 
    closed set $A \in \mathcal{C}(X)$. By definition, we have that ${\sim} a = \mathrm{cl}(X \cap A^c)$, which we will refer to as 
    ${\sim} A$ for short. We are considering the set $A \cup \mathrm{cl}(X \cap A^c)$, which can also be just written as $A \cup 
    \mathrm{cl}(A^c)$. Let $x \in X$. Suppose $x \in A$. Then, $x \in A \cup \mathrm{cl}(A^c)$ by the properties of the union. On the 
    other hand, if $x \not \in A$, then we have $x \in \mathrm{cl}(A^{c})$, and $x \in A \cup \mathrm{cl}(A^c)$ once again by the properties of 
    unions. We can thus assign any arbitrary point of the space $x$ to either term of the union, which means that 
    $A \cup \mathrm{cl}(A^c) = X$, completing the proof.
\end{proof}
Now that we have proven some of the basic (strange!) properties of dual intuitionistic logic, it is worth to take
a second to pause and think about how we actually ought to interpret this logic. Symbol-pushing can only get us so far. If intuitionistic logic is the logic reasoning ``constructively''
about how proofs may be constructed from their terms (which is formalized by the topology developed in the first section), how should 
we think about dual intuitionitic logic? One perspective, which we will take in this paper (due to the view of Yaroslav Shramko), is 
that dual intuitionistic logic is the logic of \textit{falsifiability}. In order to understand why this is the case, we will take a 
small detour into the world of abstract algebraic logic to explain the idea of a \textit{consequence relation}. The following discussion 
will be informal in character and is included mainly for intuition: a more rigorous treatment can be found in Shramko or Font \cite{yaroslavshramko_2005_dual} \cite{josepmariafont_2016_abstract}.
\begin{definition}
    A \textit{consequence relation} on a set $A$ is a relation $\vdash \subseteq \mathcal{P}(A) \times A$ such that 
    for all $X \cup Y \cup \{x\} \subseteq A$, we have the following properties 
    \begin{enumerate}
        \item If $x \in X$, then $X \vdash x$,
        \item If $x \in X$ and $X \subseteq Y$, then $Y \vdash x$,
        \item If $X \vdash y$ for all $y \in Y$ and $Y \vdash x$, then $X \vdash x$. 
    \end{enumerate}
\end{definition}
Now suppose we have a consequence relation $\vdash$ on a set $A$ that is \textit{substitution-invariant}: that is, for any 
permutation $\sigma : A \rightarrow A$ that preserves any syntactic structure we choose to put onto $A$, we have for $x, y \in A$ 
that $x \vdash y$ if and only if $\sigma(x) \vdash \sigma(y)$. Since we have assumed that $\sigma$ respects any structure we deem 
relevant on $A$, a consequence relation that is substitution-invariant is essentially one where the consequentiality is preserved 
\textit{across} structurally equivalent pairs of terms. Thus, the idea of a substitution-invariant consequence relation essentially 
captures the idea of a \textit{logic}, where the form of a proposition matters more than its content -- that is, equivalent forms, even 
if not literally equal, should entail and \textit{be entailed} by the same formulae. 

Given a substitution-invariant consequence relation $\vdash$ on a set $A$, it is possible to define \textit{rules of inference} on 
elements treated as formulae in $A$. One way to do this is to treat $A$ as the \textit{free algebra} generated by atomic 
symbols $x_i$ indexed with $i \in \mathbb{N}$, conjoined with operator symbols like $\vee$, $\wedge$, $\neg$, etc..., subject to 
well-formedness rules. Then, a \textit{rule of inference} is just a schema of the form $a \vdash b$, where $a$ and $b$ are constructed 
as specified above. A major mathematical acheivement of the early 20th century was the codification of various logics into sets of such 
rules of inference. This enabled a combinatorial and essentially mechanical study of what had been seen as semi-ineffable ``laws of thought'',
completing a project initiated by George Boole. It also dethroned the primacy of a certain mystique that some people attached to various 
methods of reasoning: by specifying a consequence relation and a specific set of inference rules, in principle, it is not difficult 
to construct a logic that expresses the notion of consequence in any arbitrary way.

Intuitionistic logic (and constructive logic more generally) owes its intellectual lineage to the \textit{intuitionistic} point of 
view on the foundations of mathematics, dating back to work of Dutch mathematician L.E.J Brouwer the early 20th century. Intuitionism 
endorsed the central claim that mathematical objects were not objectively existent or ``out there'' in any Platonic sense, but rather 
constructions of the human mind, and directly linked to our capacity to reason. The key apparatus for engaging in such constructions,
of course, is that of the \textit{mathematical proof}. A fully constructive proof is one where is a ``construction'' proving each part 
of the proposition to be proven. Having a constructive proof is the criterion for intuitionistic truth, thus a tautology is a statement 
which can \textit{always} be constructively proven.

From this point of view, it is easy to see why Brouwer rejected the law of excluded middle. By the criteria for intuitionistic truth,
the sentence $p \vee \neg p$ is true if and only if there is \textit{always} a constructive proof available for $p$ or $\neg p$: however,
Brouwer did not see this to be philosophically possible, because it was of course always possible that neither parameter could be 
proven constructively (intuitively illustrated by our discussion in the first section). Could we construct a consequence relation 
capturing this idea? Without getting into too much detail, this is precisely what Brouwer's student Arend Heyting did. Despite Brouwer 
being opposed to the formalization of intuitionism (indeed, he did not view it as philosophically desirable), by presenting a syntactic 
and semantic characterization of a consequence relation that \textit{captured} the idea behind intuitionistic logic, Heyting did essentially this.
Let us call the consequence relation associated with intuitionistic logic $\vdash_I$ for the purposes of our discussion.

Suppose that we have $\Gamma \vdash_I \varphi$. Then, we can say that by a series of purely constructive manipulations (a la Brouwer, Heyting and 
Kolmogorov): in a sense, the assumptions in $\Gamma$ constructively \textit{prove} $\varphi$. How do we dualize this notion? The dual 
of the notion of provability is refutability, and so supposing $\vdash_D$ is the consequence relation for dual intuitionistic logic, 
let us say that $\Gamma \vdash_D \varphi$ if and only if $\varphi$ is \textit{not refutable} from $\Gamma$. In this intuitive picture,
it is entirely possible to see that we can have both $\Gamma \vdash_D \varphi$ and $\Gamma \vdash_D \neg \varphi$, where neither 
$\varphi$ nor $\neg \varphi$ is necessarily refutable. All of the inference schemes of intuitionistic logic can be used to obtain dual 
intuitionistic logic, and co-Heyting algebras -- the very objects for which we meticulously developed the structure theory above -- 
give a complete semantics for the resulting logic.

With this discussion in mind, closed sets in a topological space may be interpreted as propositions that are either \textit{refutable}
or \textit{not refutable} by the axioms of the theory that the space interprets through the co-Heyting algebra given by its lattice 
of closed sets. The boundary operator $\partial a$ in particular captures how ambiguous the refutability of the proposition corresponding 
to $a$ is -- indeed, the below proposition shows that it enjoys a number of highly nontrivial properties. See Reyes and Zolfaghari \cite{reyes_1996_biheyting}
for a similar proof.

\begin{proposition}
    Let $H$ be a co-Heyting algebra, and let $a, b \in H$. Then, the following properties are always true of the co-Heyting 
    boundary operator.
    \begin{enumerate}
        \item $\partial(a \wedge b) = (\delta a \wedge b) \vee (a \wedge \delta b)$
        \item $\partial a \vee \partial b = \partial(a \vee b) \vee \partial(a \wedge b)$
        \item If $x = \partial a$, then $\partial x = x$
        \item $a = {\sim}{\sim}a \vee \partial a$
    \end{enumerate}
\end{proposition}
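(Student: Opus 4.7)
The plan is to prove all four identities topologically, by embedding $H$ into $\mathcal{C}(X)$ for some topological space $X$ (the closed-set analogue of Stone's theorem, referenced after Proposition 3.1.1). Under this embedding, $a \mapsto A$ a closed set, and we have the dictionary ${\sim}A = \mathrm{cl}(A^c)$, ${\sim}{\sim}A = \mathrm{cl}(\mathrm{int}(A))$, and $\partial A = A \cap \mathrm{cl}(A^c)$. Since $A$ is closed, this last expression equals $A \setminus \mathrm{int}(A)$, i.e.\ the familiar \emph{topological} boundary of $A$. That identification is what will do most of the conceptual work, and since the embedding preserves all co-Heyting operations, equalities verified in $\mathcal{C}(X)$ pull back to $H$.

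For (1), I would rewrite $\partial(A \cap B) = (A \cap B) \cap \mathrm{cl}(A^c \cup B^c)$ by dual De Morgan, then use that finite unions commute with closure to split $\mathrm{cl}(A^c \cup B^c) = \mathrm{cl}(A^c) \cup \mathrm{cl}(B^c)$, and distribute to land on $(B \cap \partial A) \cup (A \cap \partial B)$. For (2), I would pass to the decomposition $A = \mathrm{int}(A) \sqcup \partial A$ (valid since $A$ is closed) and trace membership. The nontrivial case is $x \in \partial A \setminus B$: one shows $x \in \partial(A \cup B)$ by combining the fact that $x$ has a neighborhood missing $B$ (because $B$ is closed and $x \notin B$) with the assumption that every neighborhood of $x$ meets $A^c$, producing a neighborhood of $x$ that meets $A^c \cap B^c = (A \cup B)^c$. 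The reverse inclusion is immediate from $\mathrm{int}(A) \subseteq \mathrm{int}(A \cup B)$ and $\mathrm{int}(A \cap B) = \mathrm{int}(A) \cap \mathrm{int}(B)$.

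Part (3) I expect to be the main obstacle, in the sense that it is the one that depends on a nontrivial topological fact rather than formal manipulation: the boundary of a closed set has empty interior. The argument is that any open $U \subseteq \partial A = A \cap \mathrm{cl}(A^c)$ is contained in $A$, while simultaneously every point of $U$ has $U$ itself as a neighborhood that must therefore meet $A^c$, forcing $U \cap A^c \neq \emptyset$ unless $U = \emptyset$. Once $\mathrm{int}(\partial A) = \emptyset$, we have $\partial A \subseteq \mathrm{cl}((\partial A)^c)$, so $\partial(\partial A) = \partial A \cap \mathrm{cl}((\partial A)^c) = \partial A$.

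Finally, for (4), since $A$ is closed the disjoint decomposition $A = \mathrm{int}(A) \cup (A \setminus \mathrm{int}(A)) = \mathrm{int}(A) \cup \partial A$ is immediate. Sandwiching gives $\mathrm{int}(A) \subseteq \mathrm{cl}(\mathrm{int}(A)) = {\sim}{\sim}A \subseteq \mathrm{cl}(A) = A$, so adjoining $\partial A$ to both sides yields $A = {\sim}{\sim}A \cup \partial A$. Translating back through the embedding gives $a = {\sim}{\sim}a \vee \partial a$, which closes out the proposition.
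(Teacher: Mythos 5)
Your proposal is correct, and it is worth comparing the two routes part by part. For (1) and (2) you are essentially on the paper's track: the paper also obtains (1) by expanding $\partial(a\wedge b)=(a\wedge b)\wedge{\sim}(a\wedge b)$ via the dual De Morgan law and distributing, and also proves (2) by chasing points through the corresponding closed sets (your version, which isolates the case $x\in\partial A\setminus B$ and uses closedness of $B$ to produce a neighborhood of $x$ meeting $(A\cup B)^c$, is actually tighter than the paper's; do spell out the ``easy'' case too: for $x\in\partial A\cap B$ one has $x\in A\cap B$ and $x\in\mathrm{cl}(A^c)\subseteq\mathrm{cl}((A\cap B)^c)$, hence $x\in\partial(A\cap B)$). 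Where you genuinely diverge is (3) and (4). The paper stays algebraic: for (3) it writes $\partial(\partial a)=(a\wedge{\sim}a)\wedge{\sim}(a\wedge{\sim}a)$, applies the dual De Morgan law, and absorbs the second factor using $a\wedge{\sim}a\leq{\sim}a\vee{\sim}{\sim}a$; for (4) it distributes ${\sim}{\sim}a\vee(a\wedge{\sim}a)$ and uses ${\sim}{\sim}a\leq a$ together with the excluded-middle identity ${\sim}a\vee{\sim}{\sim}a=1$. You instead identify $\partial A$ with the topological boundary $A\setminus\mathrm{int}(A)$ of a closed set, prove the genuinely topological lemma $\mathrm{int}(\partial A)=\emptyset$ (from which (3) is immediate), and get (4) from the decomposition $A=\mathrm{int}(A)\cup\partial A$ sandwiched by $\mathrm{int}(A)\subseteq\mathrm{cl}(\mathrm{int}(A))={\sim}{\sim}A\subseteq A$. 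Both are valid; the algebraic arguments have the advantage of working verbatim in an arbitrary co-Heyting algebra without leaning on the representation into $\mathcal{C}(X)$ (which must be assumed to preserve $\leftarrow$ for your dictionary to apply -- a point the paper itself also takes on faith), while your route buys geometric transparency by making explicit which topological facts (additivity and idempotence of closure, nowhere-density of the boundary of a closed set) each identity encodes.
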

\begin{proof}
    We prove each part by considering the lattice of closed sets that $H$ embeds into. Recall that $\partial a := 
    a \wedge {\sim} a$, and that ${\sim}a \mapsto \mathrm{cl}(A^c)$, where $a \mapsto A$.
    \begin{enumerate}
        \item 
        Notice that $\partial(a \wedge b) = \partial((a \wedge b) \wedge {\sim}(a \wedge b)) = \partial((a \wedge b) \wedge ({\sim}a 
        \vee {\sim}b))$, using the dual De Morgan law. Then, since $H$ is distributive, we may write $\partial((a \wedge b) \wedge ({\sim}a 
        \vee {\sim}b)) = \partial((a \wedge {\sim}a \wedge b) \vee (a \wedge b \wedge {\sim}b))$. From here, notice that we may use 
        the fact that the boundary operator distributes over joins to write $\partial((a \wedge {\sim}a \wedge b) \vee 
        (a \wedge b \wedge {\sim}b)) = \partial(a \wedge {\sim}a \wedge b) \vee \partial(a \wedge b \wedge {\sim}b) = 
        (\partial a \wedge b) \vee (a \wedge \partial b)$, completing the proof of this part. 
        \item
        The proof of this part is most naturally approached by translating to the language of closed sets. We have that $\partial a \vee \partial b \mapsto 
        (A \cap \mathrm{cl}(A^c)) \cup (B \cap \mathrm{cl}(B^c))$ if $a \mapsto A$ and $b \mapsto B$. Suppose $x \in 
        (A \cap \mathrm{cl}(A^c)) \cup (B \cap \mathrm{cl}(B^c))$. Then, $x$ is either in the boundary of $A$ and $A^c$, in the 
        boundary of $B$ and $B^c$, or both. Thus it is true that either $x \in (A \cup B) \cap \mathrm{cl}((A \cup B)^c)$ (i.e 
        either $x$ is in $A \cup B$, and $x$ is in the closure of its complement), or $x \in (A \cap B) \cap \mathrm{cl}((A \cap B)^c)$
        (i.e in both of $A$ and $B$, and in the closure of the boundary of their intersection). Thus $x \in ((A \cup B) \cap \mathrm{cl}((A \cup B)^c))
        \cup ((A \cap B) \cap \mathrm{cl}((A \cap B)^c))$. 

        On the other hand, suppose that $x \in ((A \cup B) \cap \mathrm{cl}((A \cup B)^c)) \cup ((A \cap B) \cap \mathrm{cl}((A 
        \cap B)^c))$. Then either $x \in (A \cup B) \cap \mathrm{cl}((A \cup B)^c)$, in which case $x$ must be either a boundary 
        point of $A$, or a boundary point of $B$, that is, $x \in (A \cap \mathrm{cl}(A^c)) \cup (B \cap \mathrm{cl}(B^c))$. Then,
        if $x \in ((A \cap B) \cap \mathrm{cl}((A \cap B)^c))$, we have the same, and thus once again we may derive $x \in 
        (A \cap \mathrm{cl}(A^c)) \cup (B \cap \mathrm{cl}(B^c))$. Since self-union is idempotent, we conclude that $x \in 
        (A \cap \mathrm{cl}(A^c)) \cup (B \cap \mathrm{cl}(B^c))$. 

        Since we have shown that $x \in ((A \cup B) \cap \mathrm{cl}((A \cup B)^c))
        \cup ((A \cap B) \cap \mathrm{cl}((A \cap B)^c))$ if and only if $x \in x \in 
        (A \cap \mathrm{cl}(A^c)) \cup (B \cap \mathrm{cl}(B^c))$, we have completed the proof that the corresponding lattice-theoretic 
        statements are equal by duality. This concludes the proof of this part.
        \item
        Suppose that $x = \partial a$. Then $x = a \wedge {\sim}a$, thus $\partial x = (a \wedge {\sim}a) \wedge {\sim}(a \wedge {\sim}a)$.
        By the dual de Morgan law, we have ${\sim}(a \wedge {\sim}a) = ({\sim}a \vee {\sim}a)$. Thus $\partial x = (a \wedge {\sim}a) \wedge 
        ({\sim}a \vee {\sim}a)$. Since $(a \wedge {\sim}a) \leq ({\sim}a \vee {\sim}a)$, it follows by the definition of meet that 
        $\partial x = (a \wedge {\sim}a) = x$, completing the proof of this part.
        \item
        Notice first that ${\sim}{\sim}a \vee \partial a = {\sim}{\sim}a \vee (a \wedge {\sim}a)$. We may apply the distributive law 
        to obtain ${\sim}{\sim}a \vee (a \wedge {\sim}a) = ({\sim}{\sim}a \vee a) \wedge ({\sim}{\sim}a \vee a)$. Since ${\sim}{\sim}a
        \leq a$, we have $({\sim}{\sim}a \vee a) \wedge ({\sim}{\sim}a \vee a) = a \wedge a = a$, showing that ${\sim}{\sim}a \vee \partial a 
        = a$, and completing the proof of this part.
    \end{enumerate}
    Since we have proven each part, we have shown that these properties characterize the co-Heyting boundary operator.
\end{proof}
These properties of co-Heyting boundaries as elements of co-Heyting algebras allow us to manipulate them algebraically, generalizing 
the concept of a boundary from topology to our logical context. For Heyting algebras, the most powerful bookkeeping tools we had to 
deal with sets of propositions (in terms of maximal consistency or inconsistency) were filters and ideals. We will now show that filters 
and ideals in co-Heyting algebras are closely related to co-Heyting boundaries, and that the relationship is crucial to understanding 
the nature of ``boundary decompositions'' of the form $a = {\sim}{\sim}a \vee \partial a$. The first result we will prove is a surprising 
relationship that Boolean algebras share with co-Heyting algebras.

\begin{proposition}
    Let $H$ be a co-Heyting algebra. Then $H$ is a Boolean algebra if and only if $\partial a = 0_H$ for all $a \in H$.
\end{proposition}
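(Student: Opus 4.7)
The plan is to reduce the biconditional to two facts that are essentially already on the table: the co-Heyting excluded middle $a \vee {\sim}a = 1_H$ proved topologically in the earlier proposition, and the uniqueness of complements in a bounded distributive lattice. Since any co-Heyting algebra $H$ is already bounded and distributive, showing $H$ is Boolean amounts to showing every element has a complement, and throughout I would use ${\sim}a$ itself as the candidate complement.

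For the easier direction, I would assume $\partial a = a \wedge {\sim}a = 0_H$ for every $a \in H$. Combined with the already-established $a \vee {\sim}a = 1_H$, this says precisely that ${\sim}a$ is a (necessarily unique) complement of $a$ in the bounded distributive lattice $H$. This is the only missing ingredient in the definition of a Boolean algebra, so $H$ is Boolean. This direction is almost a matter of citing the previous proposition plus the definition.

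For the forward direction, assume $H$ is Boolean, and let $\neg a$ denote the unique Boolean complement. I would show ${\sim}a = \neg a$ by appealing to the adjoint characterization of the co-Heyting negation, namely ${\sim}a \leq b$ iff $a \vee b = 1_H$. Taking $b = \neg a$ gives ${\sim}a \leq \neg a$. Conversely, if $a \vee b = 1_H$, then distributivity of the Boolean algebra yields
\[
\neg a = \neg a \wedge 1_H = \neg a \wedge (a \vee b) = (\neg a \wedge a) \vee (\neg a \wedge b) = \neg a \wedge b,
\]
so $\neg a \leq b$, and in particular $\neg a \leq {\sim}a$. Antisymmetry then gives ${\sim}a = \neg a$, and $\partial a = a \wedge \neg a = 0_H$.

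The main obstacle, modest as it is, will be keeping the two ``negations'' conceptually separate until they are identified: ${\sim}a$ is defined purely from the co-Heyting structure through a universal property, while $\neg a$ in the Boolean setting is a lattice-theoretic complement, and the forward direction really hinges on seeing that distributivity together with the \emph{existence} of some complement for each element forces $\neg a$ to satisfy precisely the same universal property that defines ${\sim}a$. Once that identification is in place, the computation $\partial a = 0_H$ is immediate, and no appeal to the decomposition $a = {\sim}{\sim}a \vee \partial a$ from the previous proposition is even required (though it would give an alternative proof of the reverse direction via ${\sim}{\sim}a = a$).
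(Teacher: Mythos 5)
Your proposal is correct and follows essentially the same route as the paper: the reverse direction combines the hypothesis $a \wedge {\sim}a = 0_H$ with the previously established co-Heyting excluded middle $a \vee {\sim}a = 1_H$ to exhibit ${\sim}a$ as a complement, and the forward direction identifies ${\sim}a$ with the Boolean complement $\neg a$ before computing $\partial a = a \wedge \neg a = 0_H$. The only (welcome) difference is that you justify the identification ${\sim}a = \neg a$ via the adjoint characterization ${\sim}a \leq b \iff a \vee b = 1_H$ together with uniqueness of complements in a distributive lattice, whereas the paper simply asserts that the co-Heyting subtraction on a Boolean algebra is $a \leftarrow b = a \wedge \neg b$ and computes directly; your version makes explicit a step the paper leaves implicit.
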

\begin{proof}
    Suppose that $\partial a = 0_H$ for all $a \in H$. Note that since the co-Heyting negation of an element always exists, 
    we have that $a \wedge {\sim}a = 0_H$ for all $a \in H$, meaning that ${\sim}a$ satisfies one property of the Boolean complement.
    To see that $a \vee {\sim}a = 1_H$, recall that the law of excluded middle always holds in a co-Heyting algebra. Thus ${\sim}a$
    satisfies all the properties of the Boolean complement of $a$, and since $H$ is a bounded, distributive, fully complemented lattice,
    we conclude that $H$ is a Boolean algebra, with $a \rightarrow b$ given by ${\sim}a \wedge b$.

    On the other hand, suppose that $H$ is a Boolean algebra. Note that $H$ carries the structure of a co-Heyting algebra if we 
    set $a \leftarrow b := a \wedge \neg b$. It is then easy to see that ${\sim}a = 1_H \leftarrow a = 1_H \wedge \neg a = \neg a$.
    Thus ${\sim} a = \neg a$, and since complementation in a Boolean algebra is unique, it follows that $\partial a = a \wedge \neg a
    = 0_H$ for all $a \in H$, completing the proof.
\end{proof}
Note that this result is essentially the same as saying that if all boundaries are trivial in a topology on a set $X$, then all subsets of $X$
must be clopen. Thus, the above is a criterion for the topological space corresponding to $H$ being \textit{totally disconnected}. Indeed, there 
is an intimate relationship between the ``triviality'' of the negation operation on a bi-Heyting (both Heyting and co-Heyting) algebra (where the most trivial operation is 
simple Boolean negation) and the connectedness of the corresponding spectral space. 

Our characterization of dual intuitionistic logic has thus far been primarily algebraic and topological. While every topological space induces 
both a Heyting and co-Heyting algebra (which coincide if and only if the lattice of subsets gives a Boolean algebra), this abstraction does not 
make the nature of the \textit{relationship} between these structures clear. 

\section{Kripke semantics}

In this section, we fix a Heyting algebra $H$ and the corresponding co-Heyting algebra $H^{op}$. Let $X = \mathrm{Spec}(H)$ be the set of prime 
filters on $H$, and thus the topological space for which $H$ is the lattice of open sets. Note that $\mathrm{Spec}(H^{op}) = \mathrm{Spec}(H)
 = X$. Since $H$ and $H^{op}$ share this spectral space, it seems natural to assume that perhaps a logical characterization of this spectral space
could yield a logical characterization of how the intuitionistic theory given by $H$ interacts with the dual intuitionistic theory given by 
$H^{op}$. As it turns out, with the tools of \textit{modal logic}, we may interpret $\mathrm{Spec}(H)$ through Kripke semantics as a space of 
\textit{possible worlds} where propositions are either satisfied or not satisfied. First, we give some essential definitions, following the 
exposition on modal logic from Fagin et. al. \cite{fagin_2011_reasoning}. 

\begin{definition}
    A modal language $\mathcal{L}$ is any language given by for which formulae $\varphi$ are given by the BNF grammar $\langle \varphi
    \rangle ::= p \mid \bot \mid \neg \varphi \mid \varphi \wedge \psi \mid \varphi \vee \psi \mid \varphi \rightarrow \psi \mid 
    \Diamond \varphi \mid \Box \varphi$, where $p \in \mathcal{L}$ is an atomic formula.
\end{definition}
\begin{definition}
    A Kripke frame is an ordered pair $(W, R)$, where $W$ is a set of possible worlds, and $R \subseteq W \times W$ is a binary relation on $W$.
    Elements $w \in W$ may simply be denoted as worlds, and the relation $R$ is called an accessibility relation. 
\end{definition}
\begin{definition}
    A Kripke model is a triple $(W, R, \models)$, where $(W, R)$ is a Kripke frame, and $\models$ is a relation between worlds $w \in W$ and 
    formulae $\varphi$ and $\psi$ of a modal language $\mathcal{L}$ such that 
    \begin{enumerate}
        \item $w \models \neg \varphi$ if and only if $w \not \models \varphi$,
        \item $w \models \varphi \rightarrow \psi$ if and only if $w \not \models \varphi$ or $w \models \psi$,
        \item $w \models \Box \varphi$ if and only if $u \models \varphi$ for all $u \in W$ such that $wRu$.
    \end{enumerate}
\end{definition}
We may read $w \models \varphi$ as ``$w$ models $\varphi$'', ``$w$ satisfies $\varphi$'' or ``$w$ forces $\varphi$'' (forcing is to 
be understood loosely by analogy to the idea in set theory). The action of the satisfaction relation on propositions and worlds is determined 
by the inductive application of the above definition on atomic formulae $p \in \mathcal{L}$, for some fixed modal language $\mathcal{L}$. We 
say that a formula $\varphi \in \mathcal{L}$ is valid in a Kripke model $(W, R, \models)$ if $w \models \varphi$ for every $w \in W$, and valid 
in the corresponding Kripke frame $(W, R)$ if $w \models \varphi$ for every $w \in W$ and every possible satisfaction relation $\models$. 
The validity of a formula $\varphi$ on a class $C$ of frames or models is attained if $\varphi$ is valid in every member of $C$. We denote by 
$\mathrm{Thm}(C)$ the set of all valid formulas in $C$, and by $\mathrm{Mod}(X)$ the class of all Kripke frames validating all formulae from some 
set $X \subseteq \mathcal{L}$. We say that $\mathcal{L}$ is sound with respect to a class of frames $C$ if $\mathcal{L} \subseteq \mathrm{Thm}(C)$, and 
that a class of frames $C$ is complete with respect to a modal language $\mathcal{L}$ if $\mathrm{Thm}(C) \subseteq \mathcal{L}$. 
For atomic formulae $p \in \mathcal{L}$, the truth value is assigned directly via a set of worlds $W_p \subseteq W$ such that $w 
\models p$ if and only if $w \in W_p$. This assignment provides the base case for determining the satisfaction of more complex formulae
inductively.

The operators $\Box$ and $\Diamond$ gain their meaning in modal logic through the accessibility relation $R$ in a Kripke frame. Specifically,
for any formula $\varphi$, $\Box \varphi$ represents a statement about the universality of $\varphi$ across all worlds accessible from a 
given world $w$, while $\Diamond \varphi$ encapsulates the possibility of $\varphi$ being true in at least one such world. Formally, 
these interpretations are given by
\[
w \models \Box \varphi \iff \forall u \in W, (wRu \implies u \models \varphi),
\]
\[
w \models \Diamond \varphi \iff \exists u \in W, (wRu \wedge u \models \varphi).
\]
Thus, the accessibility relation $R$ serves as the geometric backbone of modal logic, encoding how truths in one world influence or constrain
truths in other worlds. The logical structure imposed by $\Box$ and $\Diamond$ mirrors the properties of $R$. For instance, reflexivity 
of $R$ ensures that $\Box \varphi \implies \varphi$, and transitivity of $R$ underpins $\Box \varphi \implies \Box \Box \varphi$, 
which are central axioms of the modal system $\mathsf{S4}$.
This connection allows us to classify modal logics by the structural constraints placed on $R$. 

How does the structure of $R$ influence the logic we are dealing with? To answer this, we must first investigate the family of \textit{normal} modal logics. 
\begin{definition}
    A normal modal logic is a set of modal formulae $\mathcal{L}$ such that $\mathcal{L}$ contains all propositional tautologies, and for any 
    $\varphi, \psi \in \mathcal{L}$, all instances of the Kripke schema $\Box (\varphi \rightarrow \psi) \rightarrow \Box \varphi \rightarrow \Box \psi$.
    In addition, $\mathcal{L}$ must satisfy the necessitation rule, i.e if $\varphi$ is a theorem of $\mathcal{L}$, then so is $\Box \varphi$. 
\end{definition}
Normal modal logics thus extend classical logic with distribution of the $\Box$ operator over implication, and the necessitation rule. Together,
these mean that the normal modal logics all satisfy a sort of ``modal modus ponens''. The philosophical significance of this becomes clear once 
one reads $\Box \varphi$ as ``it is necessarily the case that $\varphi$''. This motivates the definition of $\Diamond \varphi = \neg \Box \neg \varphi$,
i.e if it is not necessarily the case that $\neg \varphi$, then $\varphi$ is possible. While this characterization means that normal modal logics 
already possess some ``nice'' degree of structure, we have not put any constraint on the accessibility relations for the Kripke frames corresponding 
to the class of normal modal logics. However, it turns out that changing the accessibility relations gives us an interesting way to \textit{parametrize}
normal modal logics, as seen by the following definition.
\begin{definition}
Let $\mathcal{F} = (W, R)$ be a Kripke frame, where $W$ is a set of worlds and $R \subseteq W \times W$ is an accessibility relation. 
Let $\mathcal{L}$ be a normal modal language, and suppose that $\mathcal{L}$ is sound with respect to $\mathcal{F}$, and that $\mathcal{F}$ 
is complete with respect to $\mathcal{L}$ for some derivability relation $\models$. Thus $\mathcal{F}$ models a normal modal logic.
We iteratively define a hierarchy of normal modal logics as follows.
\begin{enumerate}
    \item $\mathsf{K}$: No restrictions are placed on $R$. Thus, we have just $\Box(\varphi \rightarrow \psi)$, for $\varphi, \psi \in \mathcal{L}$.
    \item $\mathsf{T}$: The relation $R$ is reflexive, i.e. for all $w \in W$, $wRw$. Equivalently, we have $\Box \varphi \rightarrow \varphi$ for any 
    $\varphi \in \mathcal{L}$ in addition to any necessary constraints to $\mathsf{K}$. 
    \item $\mathsf{S4}$: The relation $R$ is reflexive and transitive, i.e. for all $w, v, u \in W$,
    \[
    wRw \quad \text{and} \quad (wRv \text{ and } vRu) \rightarrow wRu
    \]
    holds. Equivalently, we have $\Box \varphi \rightarrow \Box \Box \varphi$ for any $\varphi \in \mathcal{L}$ in addition to any necessary constraints 
    to $\mathsf{T}$. 
    \item $\mathsf{S5}$: The relation $R$ is an equivalence relation, i.e. $R$ is reflexive, transitive, and symmetric, i.e. for all $w, v \in W$,
    \[
    wRv \rightarrow vRw
    \]
    holds. Equivalently, we have $\varphi \rightarrow \Box \varphi$ for any $\varphi \in \mathcal{L}$ in addition to any necessary constraints to $\mathsf{S4}$. 
\end{enumerate}
\end{definition}
It is easy to see that anything that can be derived in $K$ can be derived at more restricted stages of the hierarchy, and more generally, that 
every theorem of a system with a less restricted accessibility relations is a theorem of a system with a more restricticted accessibility relation.
The first step we will take towards translating the logic of topological spaces into modal language is by identifying whether topological spaces 
can model a normal modal logic, and if so, what kind of accessibility relation we can assign. The following theorem will completely answer these 
questions. While the result is standard, we have cast the proof in the language of Heyting algebras and co-Heyting algebras, which the author 
believes is novel.

\begin{theorem}
    Let $\mathcal{L}$ be a normal modal language satisfying the $\mathsf{S4}$ axioms, let $\mathcal{T}$ be the class of topological spaces,
    and let $X \in \mathcal{T}$ be arbitrary. For any formula $\varphi \in \mathcal{L}$, let $\mathcal{O}_\varphi \subseteq X$ denote
    the set of points where $\varphi$ is satisfied, which we interpret as an open set if $\varphi$ is of the form $\Box \psi$, or 
    as a closed set if $\varphi$ is of the form $\Diamond \psi$. Then
    \[
    \Box \varphi = \mathrm{int}(\mathcal{O}_\varphi), \quad \Diamond \varphi = \mathrm{cl}(\mathcal{O}_\varphi).
    \]
    If $\varphi$ is derivable in $\mathsf{S4}$, then for every topological space in $\mathcal{T}$, $\mathcal{O}_\varphi$ satisfies the
    interpretations of $\Box \varphi$ and $\Diamond \varphi$. Conversely, if $\mathcal{O}_\varphi$ satisfies the interpretations of
    $\Box \varphi$ and $\Diamond \varphi$ in every topological space in $\mathcal{T}$, then $\varphi$ is derivable in $\mathsf{S4}$.
\end{theorem}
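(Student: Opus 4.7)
The plan is to split the statement into its soundness and completeness halves. Soundness, that every $\mathsf{S4}$-theorem is validated under the topological interpretation on every $X \in \mathcal{T}$, would be proved by checking each axiom and inference rule in turn. The $\mathsf{K}$ axiom $\Box(\varphi \rightarrow \psi) \rightarrow (\Box\varphi \rightarrow \Box\psi)$ reduces to monotonicity and finite-intersection-preservation of the interior operator on $\mathcal{O}(X)$; the $\mathsf{T}$ axiom $\Box\varphi \rightarrow \varphi$ is the tautological inclusion $\mathrm{int}(A) \subseteq A$; the transitivity axiom $\Box\varphi \rightarrow \Box\Box\varphi$ is the idempotence $\mathrm{int}(A) = \mathrm{int}(\mathrm{int}(A))$; and necessitation reduces to $\mathrm{int}(X) = X$ whenever $\varphi$ denotes all of $X$. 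The clauses for $\Diamond$ then follow from the Heyting/co-Heyting duality of Section 3: closure is dual to interior under $\mathcal{O}(X) \leftrightarrow \mathcal{C}(X)$, and the $\mathsf{S4}$ axioms for $\Diamond$ translate into the corresponding properties of $\mathrm{cl}$.

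For completeness I would argue by contrapositive. If $\varphi$ is not derivable in $\mathsf{S4}$, then by the standard Kripke-completeness of $\mathsf{S4}$ with respect to preorders (proved via the canonical-model construction over maximal $\mathsf{S4}$-consistent sets, as in Fagin et al.), there is some reflexive and transitive frame $(W, R)$, some valuation, and some world $w_0 \in W$ at which $\varphi$ fails. The task is then to exhibit such a preorder \emph{as} a topological space. Equip $W$ with the \emph{Alexandrov topology}, whose opens are exactly the $R$-upward-closed subsets of $W$; reflexivity and transitivity of $R$ together guarantee that this collection contains $\emptyset$ and $W$ and is closed under arbitrary unions and arbitrary intersections, and hence is a topology (in fact a complete Heyting algebra, in keeping with Section 1).

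The decisive calculation is that the Alexandrov interior and closure agree exactly with the Kripke semantics of $\Box$ and $\Diamond$: for any $A \subseteq W$,
\[
\mathrm{int}(A) = \{w \in W : \text{for all } u,\; wRu \Rightarrow u \in A\}, \qquad \mathrm{cl}(A) = \{w \in W : \text{there exists } u,\; wRu \text{ and } u \in A\}.
\]
Transporting the refuting Kripke valuation along this identification produces a topological space $X := W$ and an assignment $\mathcal{O}_\varphi$ at which the prescribed equations $\Box\varphi = \mathrm{int}(\mathcal{O}_\varphi)$ or $\Diamond\varphi = \mathrm{cl}(\mathcal{O}_\varphi)$ fail, contradicting topological validity; this is the missing half of the biconditional.

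The main obstacle is really the Kripke-completeness ingredient itself, which requires the Lindenbaum-style extension of consistent sets and the verification that the canonical accessibility relation is a preorder; I would cite this as standard rather than rederive it, since the novelty of our framing lies elsewhere. What is paper-specific is the bookkeeping observation that the interior and closure operators used to interpret $\Box$ and $\Diamond$ are precisely the operators governing the Heyting structure of $\mathcal{O}(X)$ and the co-Heyting structure of $\mathcal{C}(X)$ developed in Sections 1 and 3. This is what allows the theorem to serve as the promised bridge: the spatial semantics for intuitionistic logic and its dual both embed into a single $\mathsf{S4}$ correspondence, via the $\Box$-side and $\Diamond$-side of the same Kripke interpretation.
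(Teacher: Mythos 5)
Your soundness half coincides with the paper's: both verify the $\mathsf{K}$, $\mathsf{T}$, and $4$ axioms together with necessitation by translating them into monotonicity of $\mathrm{int}$, the inclusion $\mathrm{int}(A) \subseteq A$, idempotence of $\mathrm{int}$, and $\mathrm{int}(X) = X$, and then dualize to $\mathrm{cl}$ for the $\Diamond$ clauses. The completeness half, however, takes a genuinely different route. The paper argues algebraically: it forms the Lindenbaum-style Heyting algebra of boxed formulas $\mathcal{L}' = \{\Box \varphi \mid \varphi \in \mathcal{L}\}$ (and the dual co-Heyting algebra of $\Diamond$-formulas), applies the Stone representation of Section 1 to realize these as the opens and closeds of a canonical topological space, and concludes that validity in that canonical model forces derivability. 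You instead go through relational semantics: invoke Kripke completeness of $\mathsf{S4}$ over preorders, then turn the refuting preorder into a topological space via the Alexandrov topology, checking that Alexandrov interior and closure compute exactly the Kripke $\Box$ and $\Diamond$. Your displayed identities for $\mathrm{int}(A)$ and $\mathrm{cl}(A)$ are correct precisely because $R$ is reflexive and transitive, so the transfer of the countermodel goes through. Each approach buys something: yours is fully explicit modulo one black box (Kripke completeness, which you rightly flag and cite rather than rederive), and it makes the connection to the possible-worlds reading of the next section immediate; the paper's stays inside the algebraic and Stone-theoretic machinery of Sections 1--3 and never mentions frames at this stage, at the cost of leaving the verification that $\mathcal{L}'$ is a Heyting algebra and that its canonical model is adequate considerably more compressed than your argument.
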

\begin{proof}
    First, we demonstrate that the collection of formulae of the form $\Box \varphi$ satisfies the axioms and rules of $\mathsf{S4}$. 
    The axiom $\Box (\varphi \to \psi) \to (\Box \varphi \to \Box \psi)$ corresponds topologically to the inclusion 
    $\mathrm{int}(\mathcal{O}_{\varphi \to \psi}) \subseteq \mathrm{int}(\mathcal{O}_\varphi) \to \mathrm{int}(\mathcal{O}_\psi)$, 
    which holds because $\mathrm{int}$ preserves logical implication in the sense of set inclusion. The axiom $\Box \varphi \to \varphi$ 
    corresponds to $\mathrm{int}(\mathcal{O}_\varphi) \subseteq \mathcal{O}_\varphi$, which is true because the interior is a subset of the 
    original set. The axiom $\Box \varphi \to \Box \Box \varphi$ corresponds to $\mathrm{int}(\mathcal{O}_\varphi) \subseteq 
    \mathrm{int}(\mathrm{int}(\mathcal{O}_\varphi))$, which holds because $\mathrm{int}$ is idempotent. These interpretations show that 
    $\Box \varphi$ is soundly interpreted as $\mathrm{int}(\mathcal{O}_\varphi)$.

    Similarly, $\Diamond \varphi = \neg \Box \neg \varphi$ implies that $\Diamond \varphi$ corresponds to the closure of the set where 
    $\varphi$ holds, $\mathcal{O}_\varphi$. The axiom $\varphi \to \Diamond \varphi$ corresponds to $\mathcal{O}_\varphi \subseteq 
    \mathrm{cl}(\mathcal{O}_\varphi)$, which holds because any set is contained in its closure. The axiom $\Diamond \Diamond \varphi 
    \to \Diamond \varphi$ corresponds to $\mathrm{cl}(\mathrm{cl}(\mathcal{O}_\varphi)) \subseteq \mathrm{cl}(\mathcal{O}_\varphi)$, 
    which holds because $\mathrm{cl}$ is idempotent. This shows that $\Diamond \varphi$ is soundly interpreted as $\mathrm{cl}
    (\mathcal{O}_\varphi)$.

    To establish the completeness of this interpretation, consider the Heyting algebra of formulae $\mathcal{L}' = \{\Box \varphi \mid 
    \varphi \in \mathcal{L}\}$ with operations defined by $\Box \varphi \wedge \Box \psi = \Box (\varphi \wedge \psi)$, $\Box \varphi 
    \vee \Box \psi = \Box (\varphi \vee \psi)$, and $\Box \varphi \to \Box \psi = \Box \chi$ where $\chi$ satisfies $\Box (\varphi 
    \wedge \chi) \leq \Box \psi$. This satisfies the axioms of a Heyting algebra, as can be verified explicitly by checking associativity, 
    commutativity, and distributivity of the operations. By Stone duality, this Heyting algebra corresponds to a topological space $X$ 
    where each $\Box \varphi$ is uniquely associated with an open set $\mathcal{O}_{\Box \varphi} \subseteq X$. Similarly, the formulae 
    $\mathcal{L}'' = \{\Diamond \varphi \mid \varphi \in \mathcal{L}\}$ form a co-Heyting algebra, where operations are defined dually 
    and correspond to closed sets in the same topological space $X$.

    For any topological space $X$ and any formula $\varphi$, the interpretations $\mathrm{int}(\mathcal{O}_\varphi)$ and 
    $\mathrm{cl}(\mathcal{O}_\varphi)$ satisfy the rules of inference of $\mathsf{S4}$, as shown earlier. Thus, if $\varphi$ is derivable 
    in $\mathsf{S4}$, it is valid in every topological model. Conversely, if $\varphi$ is valid in every topological model, then it holds 
    in the canonical model associated with the Heyting algebra of $\mathcal{L}'$. Since this canonical model satisfies all axioms and 
    rules of $\mathsf{S4}$, $\varphi$ must be derivable in $\mathsf{S4}$. This completes the proof of soundness and completeness.
\end{proof}
Thus, for any $\mathsf{S4}$ theory $\mathcal{L}$, there exists a topological space $X_{\mathcal{L}}$ such that $\mathcal{O}(X_{\mathcal{L}})$
is a Heyting algebra such that $\{\Box \varphi \mid \varphi \in \mathcal{L}\} \simeq \mathcal{O}(X_{\mathcal{L}})$, and $\mathcal{C}(X_{\mathcal{L}})$
is a co-Heyting algebra such that $\{\Diamond \varphi \mid \varphi \in \mathcal{L}\} \simeq \mathcal{C}(X_{\mathcal{L}})$. Conversely, for any 
topological space $X$, there exists an $\mathsf{S4}$ theory $\mathcal{L}_{X}$ such that $\{\Box \varphi \mid \varphi \in \mathcal{L}_X\} \simeq \mathcal{O}(X)$, and 
$\{\Diamond \varphi \mid \varphi \in \mathcal{L}\} \simeq \mathcal{C}(X)$. Since any Heyting algebra is isomorphic to the lattice of opens of some 
space, and the lattice of open sets of any space can be embedded in a Heyting algebra, and similar for co-Heyting algebras, we can take the soundness 
and completeness of $\mathsf{S4}$ with respect to topological spaces as saying that we may interpret any Heyting algebra as an algebra of \textit{necessary}
propositions for a propositional $\mathsf{S4}$ theory, and any co-Heyting algebra may be interpreted as an algebra of \textit{possible} propositions.
This tracks with the interpretations of both intuitionistic and dual intuitionistic logic spelled out earlier in this paper. A \textit{necessary}
proposition is one that is always provable (i.e a tautology), whereas a \textit{possible} proposition is not disprovable, or not refutable. 
As a corollary of the soundness and completeness theorem, one can also prove an interesting result showing that worlds in Kripke frames correspond 
to points of a topological space $X$ modeling an $\mathsf{S4}$ theory $\mathcal{L}$. We will prove only one direction, as the other direction is 
largely symmetric.

\begin{proposition}
    Let $X$ be a topological space corresponding to an $\mathsf{S4}$ language $\mathcal{L}$. The points of $X$ are the worlds $w$ in a Kripke 
    model $(W, R, \models)$ sound for $\mathcal{L}$.
\end{proposition}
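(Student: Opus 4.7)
The plan is to realize the Kripke model as the topological space $X$ itself, taking the specialization preorder as the accessibility relation and exporting the topological interpretation of formulas as the forcing relation. First, I would set $W := X$ and declare $wRu$ to hold iff every open set containing $w$ also contains $u$. In the concrete case $X = \mathrm{Spec}(H)$, this collapses to inclusion of prime filters, $w \subseteq u$. The relation is automatically reflexive and transitive, so $(W, R)$ is an $\mathsf{S4}$-frame, and the open sets of $X$ are precisely the upward $R$-closed sets, which is the bridge between the two semantics.

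Second, I would define $\models$ on atomic propositions by $w \models p$ iff $w \in \mathcal{O}_p$ and extend inductively via the standard Kripke clauses. The substantive part of the proof is the inductive compatibility claim that $w \models \varphi$ iff $w \in \mathcal{O}_\varphi$ for every $\varphi \in \mathcal{L}$. The Boolean cases are routine from the identities $\mathcal{O}_{\varphi \wedge \psi} = \mathcal{O}_\varphi \cap \mathcal{O}_\psi$ and similar. The modal cases amount to showing $w \models \Box \varphi$ iff $w \in \mathrm{int}(\mathcal{O}_\varphi)$, and dually for $\Diamond$. Once compatibility is in hand, soundness is immediate: by the preceding theorem, any $\varphi$ derivable in $\mathcal{L}$ satisfies $\mathcal{O}_\varphi = X$, hence $w \models \varphi$ for every $w \in W$, making $\varphi$ valid in $(W, R, \models)$.

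The hard part will be the converse direction of the $\Box$-compatibility, namely showing that if every $R$-successor of $w$ forces $\varphi$, then $w$ is an interior point of $\mathcal{O}_\varphi$. This requires the intersection $\{u : wRu\}$ of all open neighborhoods of $w$ to itself be open, which is not automatic for an arbitrary topological space. The obstruction is resolved by choosing $X$ to be an Alexandrov space, which is permissible because Alexandrov topologies are in bijection with preorders and already give a sound and complete semantics for $\mathsf{S4}$; in particular, one may replace the Stone-type space $\mathrm{Spec}(H)$ by the Alexandrov topology on its specialization preorder of prime filters without changing the validated formulas. Under this restriction, $\{u : wRu\}$ becomes the smallest open containing $w$, the converse goes through, and the Kripke model faithfully mirrors the topological one, yielding the desired soundness.
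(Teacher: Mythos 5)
Your proposal follows essentially the same route as the paper's proof: take $W = X$, let $R$ be the specialization preorder ($wRu$ iff every open set containing $w$ also contains $u$), and define forcing by membership in the interpreting sets $\mathcal{O}_\varphi$, so that $w \models \Box\varphi$ iff $w \in \mathrm{int}(\mathcal{O}_\varphi)$. Where you go beyond the paper is in the $\Box$-clause: the paper simply asserts that $x \in \mathrm{int}(\mathcal{O}_\varphi)$ holds if and only if every $R$-successor of $x$ satisfies $\varphi$, whereas you correctly observe that the ``if'' direction can fail in a general topological space, because the set of $R$-successors of $x$ (the intersection of all open neighbourhoods of $x$) need not be open. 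Your repair --- replacing $\mathrm{Spec}(H)$ by the Alexandrov topology on its specialization preorder, so that minimal open neighbourhoods exist and the compatibility claim $w \models \varphi$ iff $w \in \mathcal{O}_\varphi$ goes through by induction --- is the standard one. The one step that still deserves an argument is your claim that this replacement does not change the validated formulas: the Alexandrov refinement enlarges interiors and shrinks closures, so for a theory $\mathcal{L}$ strictly stronger than $\mathsf{S4}$ the transfer of validity from the spectral topology to the Alexandrov one is not automatic (for $\mathsf{S4}$ itself it is immediate, since any reflexive and transitive frame is sound for $\mathsf{S4}$, which is all the proposition strictly requires). This residual gap is one the paper's own proof shares and indeed never notices, so your version is the more careful of the two.
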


\begin{proof}
    We assume that the $\mathsf{S4}$ language $\mathcal{L}$ is interpreted within the topological space $X$, with formulas in $\mathcal{L}$
    corresponding to subsets of $X$. The mapping to subsets is done in the same way done in the proof of the soundness and completeness theorems 
    above.
    
    In Kripke semantics, a frame is a pair $(W, R)$ where $W$ is a set of worlds and $R \subseteq W \times W$ is a binary relation 
    indicating accessibility between worlds. Satisfaction in a Kripke model is defined such that a world $w \in W$ satisfies 
    $\Box \varphi$ if all $R$-accessible worlds satisfy $\varphi$, and $w$ satisfies $\Diamond \varphi$ if at least one $R$-accessible 
    world satisfies $\varphi$.
    To relate $X$ to a Kripke frame, we treat the points of $X$ as the worlds in the frame, so that $W = X$. Accessibility between worlds
    is topologically induced. Specifically, for $x, y \in X$, we say $x R y$ if $y$ lies in every open set containing $x$. This notion
    of accessibility aligns with the interpretation of modal operators in $X$, as established in the proof of soundness and completeness with 
    respect to topological semantics. 
    To establish the soundness of this Kripke model, we verify that $X$ satisfies the conditions of the Kripke semantics. For 
    $\Box \varphi$, we note that $\Box \varphi$ corresponds to $\mathrm{int}(\mathcal{O}_{\varphi})$, so $x \models \Box \varphi$ if and only
    if $x \in \mathrm{int}(\mathcal{O}_{\varphi})$, with notation for open sets carried over from the proof of the soundness and completeness theorem.
    This holds if and only if every point $y$ accessible from $x$ satisfies $\varphi$, as required by Kripke semantics. Similarly, 
    $\Diamond \varphi$ corresponds to $\mathcal{O}_{\varphi}^c$, so $x \models \Diamond \varphi$ if and only if $x \in \mathcal{O}_{\varphi}^c$. This is true if and only if there exists an accessible point $y$ such that 
    $y \models \varphi$, again in agreement with Kripke semantics.
    Thus, the points of $X$ are the worlds of a Kripke frame $(X, R)$, where $R$ is defined by the topological structure, and this frame 
    is sound for the modal language $\mathcal{L}$. This can be extended to a Kripke model $(X, R, \models)$ with the satisfaction relation 
    defined by accessibility as above.
\end{proof}

Thus, to complete the translation of intuitionistic and dual intuitionistic notions to the language of $\mathsf{S4}$, the prime filters of a
Heyting algebra $H$, or a co-Heyting algebra $H^{op}$, naturally carry the structure of an $\mathsf{S4}$ Kripke frame. Indeed, the operation of 
quotienting by a filter, as described in section 1, can be seen as ``collapsing'' possible worlds in logical space onto one another -- giving a vivid 
and clear picture of the algebraic meaning of quotients on Heyting and co-Heyting algebras. Conceptually, we may summarize all of our findings thus far with the following diagram.
\vspace{2em}
\begin{center}
\adjustbox{scale=0.75,center}{%
    \begin{tikzcd}
        & {\textrm{S4-complete languages}} \\
        {\textrm{algebra of necessary propositions}} & {\textrm{S4-sound Kripke frames}} & {\textrm{algebra of possible propositions}} \\
        \\
        & {\textrm{topological spaces}} \\
        {\textrm{lattice of opens}} && {\textrm{lattice of closeds}} \\
        {\textrm{Heyting algebras}} && {\textrm{co-Heyting algebras}}
        \arrow[from=1-2, to=2-1]
        \arrow[from=1-2, to=2-3]
        \arrow[tail reversed, from=2-1, to=5-1]
        \arrow[tail reversed, from=2-2, to=1-2]
        \arrow[tail reversed, from=2-2, to=4-2]
        \arrow[tail reversed, from=2-3, to=5-3]
        \arrow[from=4-2, to=5-1]
        \arrow[from=4-2, to=5-3]
        \arrow[tail reversed, from=5-1, to=6-1]
        \arrow[tail reversed, from=5-3, to=6-3]
        \arrow[tail reversed, from=6-1, to=6-3]
    \end{tikzcd}
}
\end{center}
\vspace{2em}
This suggests we can try to characterize some of the stranger operations and properties of dual intuitionistic logic in terms of the Kripke semantics for $\mathsf{S4}$. 
Indeed, we will now construct simple $\mathsf{S4}$ Kripke models to do this.
\begin{example}
    A Kripke model for $\Diamond p \wedge \Diamond \neg p$. 
\end{example}
\begin{proof}[Example]
    Consider a Kripke model $\mathcal{M} = (W, R, V)$, where $W = {w_0, w_1, w_2}$, $R$ is the accessibility relation, and $V$ is the valuatio
    n function (this is a special case of the modeling relation $\models$). We define the accessibility relation $R$ so that it satisfies the
    $\mathsf{S4}$ axioms: suppose $w R w$ for all $w \in W$, so $R$ is reflexive, and suppose $w_0 R w_1$ and $w_1 R w_2$ imply $w_0 R w_2$,
    so $R$ is transitive. Now suppose that $w_0 R w_1$ and $w_0 R w_2$, but that $w_1$ and $w_2$ are not accessible to each other. The 
    valuation function $V$ assigns $V(p) = {w_1}$, so $p$ is true in $w_1$ and false in $w_0$ and $w_2$.

    In this model, $w_0 \models \Diamond p$, since $w_0 R w_1$ and $w_1 \models p$. Simultaneously, $w_0 \models \Diamond \neg p$, because
    $w_0 R w_2$ and $w_2 \models \neg p$. Hence, $w_0 \models \Diamond p \wedge \Diamond \neg p$, demonstrating that this configuration
    satisfies the $\mathsf{S4}$ axioms while modeling the formula $\Diamond p \wedge \Diamond \neg p$.
\end{proof}
It is obvious that $\Diamond p \wedge \Diamond \neg p$ corresponds via topological semantics to a dual intuitionistic statement of the form 
$a \wedge {\sim} a$. Thus, in a sense, this example shows that the paraconsistency of dual intuitionistic logic is ``global'' in a sense: true 
contradictions arise from propositions that are inconsistent at the \textit{global} level, even if they are perfectly consistent at the local 
level. Note that this example above would \textit{not} work in $\mathsf{S5}$: the symmetry of $R$ would be required in that case, which would 
break the construction. We now move on to another example where Kripke semantics helps us build an intuition for the weirdness of dual intuitionistic 
logic.
\begin{example}
    Demystifying the $\leftarrow$ operation.
\end{example}
\begin{proof}[Example]
    Consider a Kripke model $\mathcal{M} = (W, R, V)$ with $W = {w_0, w_1, w_2}$, $R = {(w_0, w_1), (w_0, w_2), (w_1, w_1), (w_2, w_2)}$, 
    and $V(p) = {w_1}$, $V(q) = {w_2}$. The accessibility relation $R$ satisfies reflexivity since $(w_1, w_1) \in R$ and 
    $(w_2, w_2) \in R$, ensuring $w R w$ for all $w \in W$. It satisfies transitivity because if $(w_0, w_1) \in R$ and $(w_1, w_1) \in R$, 
    then $(w_0, w_1) \in R$, and similarly for other pairs. Thus $\mathcal{M}$ corresponds to an $\mathsf{S4}$ theory. At $w_0$, $w_0 
    \models \Diamond p$ since $w_1 \models p$ and $w_0 R w_1$, and $w_0 \models \Diamond q$ since $w_2 \models q$ and $w_0 R w_2$. 
    At $w_1$, $w_1 \models p \wedge \neg q$, and at $w_2$, $w_2 \models q \wedge \neg p$. The formula $w \models \chi$ represents the 
    strongest condition on $w$ such that, whenever $w \models \chi$ and $w \models \varphi$, it must follow that $w \models \psi$, for
    arbitrary modal $\varphi$ and $\psi$. In modal terms, this ensures that $\chi$ captures exactly the worlds where $\psi$ holds or 
    $\varphi$ is false. At $w_0$, $w_0 \models \chi$ since $\Diamond p$ and $\Diamond q$ are both satisfied. At $w_1$, $\chi$ is false 
    since $p$ is true and $q$ is false. At $w_2$, $\chi$ is true since $q$ is true.
\end{proof}
For arbitrary dual intuitionistic propositions $\varphi$ and $\psi$, is evident that $\varphi \leftarrow \psi$ corresponds to the strongest 
dual intuitionistic proposition ensuring that one can conclude $\psi$ without necessarily concluding $\varphi$. In the Kripke model, this is represented by a modal formula true in
all worlds where $\psi$ holds or $\varphi$ does not. Using the dualities we have developed to associate dual intuitionistic propositions $\varphi$ and $\psi$ with $\Diamond p$
and $\Diamond q$, respectively, the model reveals how co-implication relies on the interaction between accessibility and possibility. In a 
sense, this example shows that $\varphi \leftarrow \psi$ encodes a ``fallback" relationship: it is globally consistent as long as no
accessible world simultaneously forces $\varphi$ and excludes $\psi$. Note that this structure leverages the asymmetry of $\mathsf{S4}$, 
where accessibility is reflexive and transitive but not symmetric. 

With this, we have done a large part of what we set out to do in this paper. It is clear that $\mathsf{S4}$ subsumes both intuitionistic and 
dual intuitionistic logic in an important way: intuitionistic logic provides the backbone for reasoning about \textit{necessary} propositions 
in a constructive, enumerative way. On the other hand, dual intuitionistic logic provides a language in which we may reason about propositions 
that are \textit{possible}, including propositions that when conceptualized as sets contains worlds that prima facie ``contradict'' one another.
However, as the $\mathsf{S4}$ axioms have showed us, such contradiction is usually of a \textit{global} rather than a local nature. Thus, from 
this point of view, much of the mystery underlying the paraconsistency of dual intuitionistic logic simply vanishes. 

\section*{Concluding remarks}
We have come a long way in this paper, and the journey warrants reflection. The first key insight that the author would like to stress is more 
technical: the expressive power and decidability of dual intuitionistic logic is directly tied to the analogous properties for $\mathsf{S4}$. 
Thus, since $\mathsf{S4}$ can be embedded into a fully decidable subset of (classical) first-order predicate logic, the expressive power of the 
propositional variants of dual intuitionistic logic (also intuitionistic logic) cannot exceed that of first-order logic, or indeed even that of 
$\mathsf{S4}$ (as both logics are, in a sense, subsumed by it). While this removes much of the \textit{mystqiue} surrounding the paraconsistency 
of propositional dual intuitionistic logic, it does make it much easier to deal with than many other paraconsistent logics. While logics like 
Priest's Logic of Paradox, or Kleene's 3 valued logic have simple algebraic and possible worlds semantics, they do not have the nice and ``clean''
relationship to topological spaces and $\mathsf{S4}$ that dual intuitionistic logic does (see Priest \cite{priest_2008_an} or Font \cite{josepmariafont_2016_abstract} for more discussion of this). Perhaps this indicates that this logic is deal for dealing 
with inconsistencies arising in philosophical situations relating to impossible worlds, properties, and a myriad of other metaphysical issues: 
while we can use paraconsistent \textit{language} from the perspective of co-Heyting algebras, we can always translate proof-theoretic or semantic 
concerns back into the philosophical \textit{lingua franca} of $\mathsf{S4}$. This means dual intuitionistic logic is a paraconsistent logic that 
is nonetheless rooted in a ``friendly face'' who makes the whole idea of inconsistency much more tractable.

Moreover, in reading this paper, one might have noted that the discussion never left the \textit{propositional} case: indeed, it was propositional 
intuitionistic logic, dual intuitionistic logic, and $\mathsf{S4}$ that were discussed throughout. It is interesting to ponder how the generalization 
of these relationships and dualities to higher-order intuitionistic and dual intuitionistic logic would work. It is the author's suspicion that this 
could be done by synthesizing the extensive literature available on the sheaf-theoretic semantics for higher-order $\mathsf{S4}$ \textit{modal} logic 
(as given by Awodey et al. and Kishida \cite{awodey_2014_topos} \cite{kishida_2011_neighborhoodsheaf}) with concepts from topos theory, see also Kishida's essay in 
Landry \cite{landry_2017_categories}. Sheaves and topoi are the key technical tools to generalizing Stone-type 
spectral spaces as used in this paper to higher-order contexts, and in the case of \textit{Heyting algebras}, the generalization is well-understood:
the global sections of the subobject classifier of every elementary topos are isomorphic to a Heyting algebra, and so via construction of an appropriately
chosen Grothendieck topology and some specification of an ``internal type theory'', every elementary topos can be realized as a model of higher 
order intuitionistic logic (a standard reference for this is MacLane-Moerdijk \cite{maclane_2012_sheaves}). In relation to the discussion in this paper, this construction is possible because the Grothendieck topologies defining 
elementary topoi are defined in terms of \textit{open coverings}, which are directly analogous to the open sets of a topological space. It is less clear,
however, how one might define topoi with some notion of \textit{closed covering}. Work in providing higher-order categorical semantics for 
paraconsistent logics has so far been focused on the logic of \textit{complement} topoi, which is a ``topos in reverse'' that does not dualize 
all constructions fully -- this does not yield higher-order dual intuitionistic logic. Moreover, in his dissertation \textit{Closed Set Categories in 
Categories}, William James claims that when the appropriate kind of semantic dualization \textit{is} carried out, the resulting analogue to the 
subobject classifer (which is a quotient object classifier) remains a Heyting algebra rather than becoming a co-Heyting algebra \cite{james_1996_closed}. 

A higher-order generalization of the contents of this paper would thus solve major problems related to the categorical semantics of higher-order 
paraconsistent logics in general. Moreover, given the close relationship that higher-order constructive logic as modeled by topoi share with 
notions in set theory, it is quite possible that such advances would shed light on phenomena like forcing in set theory. Indeed, if there is a possible 
world that exists at the boundary of two contradictory propositions, in a sense one could say that world is ``independent'' of the others. It is the 
author's intuition that this dual intuitionistic concept could share a serious relationship with the Kripke-Joyal semantics for forcing that sheds 
light on the topos-theoretic view of the universe of sets (see Reyes and Zolfaghari \cite{reyes_1996_biheyting} for more discussion of this). Obviously, possible applications to theoretical computer science (via the theory of the effective 
topos) are very much feasible too. All in all, it is the hope of the author that this paper convinced you that dual intuitionistic logic is an 
interesting, very tractable, and very elegant paraconsistent logic, worthy of deeper investigation and further study.

\section*{Acknowledgements}
This research was done by the author through the benefit of the Jejurikar Summer Scholars Fund. The author would like to thank Professor David 
Smyth for his ceaseless and enthusiastic mentorship throughout the time this research was conducted, and would also like to thank the Office 
of Scholar Development at Tufts University, especially Dr. Anne Moore, for providing him the opportunity to produce this research as a part of the 
Tufts Summer Scholars program. Thanks are also due to Dr. Michael Jahn and Professor Emeritus Anselm Blumer for their detailed proofreading and 
valuable comments.

\nocite{*}
\printbibliography

@phdthesis{james_1996_closed,
  author = {James, William},
  month = {08},
  title = {Closed set logic in categories},
  year = {1996}
}

@article{lawvere_1991_intrinsic,
  author = {Lawvere, F. William},
  publisher = {Springer Heidelberg},
  title = {Intrinsic Co-Heyting Boundaries and the Leibniz Rule in Certain Toposes},
  volume = {LNM 1488},
  year = {1991},
  organization = {Category Theory - Proceedings of the International Conference Held in Como 1990}
}

@book{gratzer_2017_lattice,
  author = {Gratzer, George},
  publisher = {Birkhauser Verlag Ag},
  title = {Lattice Theory.},
  year = {2017}
}

@book{maclane_2012_sheaves,
  author = {MacLane, Saunders and Ieke Moerdijk},
  month = {12},
  publisher = {Springer Science and Business Media},
  title = {Sheaves in Geometry and Logic},
  year = {2012}
}

@book{johnstone_1982_stone,
  author = {Johnstone, Peter T},
  publisher = {Cambridge University Press},
  title = {Stone Spaces},
  year = {1982}
}

@book{landry_2017_categories,
  author = {Landry, Elaine M},
  publisher = {Oxford University Press},
  title = {Categories for the working philosopher},
  year = {2017}
}

@book{fagin_2011_reasoning,
  author = {Fagin, Ronald and others},
  publisher = {Cambridge, Mass. MIT Press},
  title = {Reasoning about knowledge},
  year = {2011}
}

@book{priest_2008_an,
  author = {Priest, Graham},
  month = {04},
  publisher = {Cambridge University Press},
  title = {An Introduction to Non-Classical Logic},
  year = {2008}
}

@article{kishida_2011_neighborhoodsheaf,
  author = {Kishida, Kohei},
  month = {11},
  pages = {129-143},
  title = {Neighborhood-Sheaf Semantics for First-Order Modal Logic},
  doi = {10.1016/j.entcs.2011.10.011},
  volume = {278},
  year = {2011},
  journal = {Electronic Notes in Theoretical Computer Science}
}

@article{awodey_2014_topos,
  author = {Awodey, Steve and Kishida, Kohei and Kotzsch, Hans-Christoph},
  title = {Topos Semantics for Higher-Order Modal Logic},
  doi = {10.2143/LEA.228.0.3078176},
  volume = {57},
  year = {2014},
  journal = {Logique et Analyse}
}

@book{josepmariafont_2016_abstract,
  author = {Josep Maria Font},
  month = {04},
  publisher = {College Publications},
  title = {Abstract Algebraic Logic. an Introductory Textbook},
  year = {2016}
}

@article{yaroslavshramko_2005_dual,
  author = {Yaroslav Shramko},
  month = {08},
  pages = {347-367},
  publisher = {Springer Science+Business Media},
  title = {Dual Intuitionistic Logic and a Variety of Negations: The Logic of Scientific Research},
  doi = {10.1007/s11225-005-8474-7},
  volume = {80},
  year = {2005},
  journal = {Studia Logica}
}

@book{riehl_2016_category,
  author = {Riehl, Emily},
  publisher = {Dover Publications, Cop},
  title = {Category theory in context},
  year = {2016}
}

@book{paoloaluffi_2021_algebra,
  author = {Paolo Aluffi},
  month = {11},
  publisher = {American Mathematical Soc.},
  title = {Algebra: Chapter 0},
  year = {2021}
}

@book{munkres_2018_topology,
  author = {Munkres, James R},
  publisher = {Pearson},
  title = {Topology},
  year = {2018}
}

@article{reyes_1996_biheyting,
  author = {Reyes, Gonzalo E. and Zolfaghari, Houman},
  month = {02},
  pages = {25-43},
  title = {Bi-Heyting algebras, toposes and modalities},
  doi = {10.1007/bf00357841},
  volume = {25},
  year = {1996},
  journal = {Journal of Philosophical Logic}
}

@article{BEZHANISHVILI2019403,
title = {A semantic hierarchy for intuitionistic logic},
journal = {Indagationes Mathematicae},
volume = {30},
number = {3},
pages = {403-469},
year = {2019},
issn = {0019-3577},
doi = {https://doi.org/10.1016/j.indag.2019.01.001},
author = {Guram Bezhanishvili and Wesley H. Holliday},
}

\end{document}